\documentclass[twoside,11pt]{article}

\usepackage{amsmath, mathtools, amssymb, amsthm}
\usepackage{natbib}
\usepackage{microtype}
\usepackage{enumerate, multicol}
\usepackage{bbm}
\usepackage[linesnumbered,ruled,vlined,algo2e]{algorithm2e}
\usepackage[font=small,labelfont=bf]{caption}
\usepackage{array} 
\usepackage[table]{xcolor}
\usepackage{tikz}
\usetikzlibrary{positioning, arrows, shapes.geometric, calc}
\usepackage{fullpage}
\usepackage[colorlinks=true, linkcolor=blue, urlcolor=blue, citecolor=blue]{hyperref}
\usepackage{algorithm,algorithmic}
\usepackage{breakcites}


\definecolor{gaussianfill}{RGB}{220,230,250}
\definecolor{kssgfill}{RGB}{220,250,230}
\definecolor{usgfill}{RGB}{245,230,250}

\SetKwInput{KwInput}{Input}
\SetKwInput{KwOutput}{Output}
\SetKwRepeat{KwRepeat}{repeat}{until}

\theoremstyle{plain}
\newtheorem{theorem}{Theorem}[section]
\newtheorem{lemma}[theorem]{Lemma}

\newtheorem{remark}[theorem]{Remark}
\newtheorem{definition}[theorem]{Definition}
\numberwithin{equation}{section}

\newcommand{\EE}{\mathbb{E}}

\newcommand{\NN}{\mathbb{N}}

\newcommand{\RR}{\mathbb{R}}

\newcommand{\diam}{\mathrm{diam}}

\newcommand{\sign}{\operatorname{sign}}

\newcommand{\argmin}{\mathop{\mathrm{argmin}}}

\def\T{^\top}

\allowdisplaybreaks

\begin{document}

\begin{center}
{\LARGE Fast Near-Optimal Estimation over Symmetric Norm Balls}

{\large
\begin{center}
Matey Neykov
\end{center}}

{Department of Statistics and Data Science, Northwestern University\\
\texttt{mneykov@northwestern.edu}}
\end{center}

\begin{abstract}
This short note proposes a polynomial-time algorithm for near-optimal Euclidean estimation of a signal constrained to lie in the unit ball of a symmetric norm, where the symmetry is with respect to a known basis and the norm is accessible through an evaluation oracle. We further extend the method to a random-design, moderate-dimensional linear regression setting, where the regression parameter is likewise assumed to belong to a constraint set defined by a symmetric norm.
\end{abstract}

\tableofcontents
\section{Introduction}

We consider the Gaussian sequence model (GSM)
\begin{align*}
    Y = \mu + \xi,
\end{align*}
with $\xi \in \RR^n$ being a vector comprised of independent sub-Gaussian entries with common sub-Gaussian parameter $\sigma$\footnote{i.e., $\EE \exp(\lambda \cdot \xi_i) \leq \exp(\lambda^2 \sigma^2/2)$, for all $\lambda \in \RR$. } (assumed to be known), where the unknown mean vector \(\mu\in\mathbb R^n\) is known to belong to a
constraint set \(K\). This model is one of the basic testing grounds for
understanding the interaction between geometry, computation, and statistical
optimality.  It is simple enough to admit sharp theory, yet rich enough to
capture many of the phenomena that arise in nonparametric estimation,
high-dimensional statistics, and signal denoising.  In particular, when \(K\)
is a convex body, the maximum likelihood estimator is simply the Euclidean
projection of \(Y\) onto \(K\).  A central question is therefore: when is this natural estimator minimax optimal, and when it is not, can one construct a computationally tractable replacement?

In this paper we assume further that the Minkowski functional of $K$ is a symmetric norm.  Throughout, \(K\) is the unit ball of a symmetric norm on
\(\mathbb R^n\), meaning that
\begin{align*}
    \|(x_1,\ldots,x_n)\|_K
    =
    \|(\eta_1 x_{\pi(1)},\ldots,\eta_n x_{\pi(n)})\|_K
\end{align*}
for every permutation \(\pi\) of \([n] = \{1,\ldots, n\}\) and every choice of signs
\(\eta_i\in\{-1,1\}\).  Of course, the theory also remains valid for any known rotation of such a constraint. This class contains all \(\ell_p\) balls, \(1\le p\le\infty\), as well as many other natural orthosymmetric and
permutation-invariant constraints.  The key advantage of this class is that
its entropy structure is well understood, as we will explain soon.

The information-theoretic side of our GSM estimation question is now fairly well understood. For bounded convex constraints, \cite{neykov2022minimax} showed that the minimax risk under squared Euclidean loss is characterized, up to universal constants, by a local entropy fixed point.  More precisely, if \(\log M^{\operatorname{loc}}(\eta)\) denotes the local metric entropy
of \(K\) (see Definition \ref{definition:local:metric:entropy}), then the critical radius
\begin{align}\label{minimax:rate}
        \eta^* = \sup\left\{ \eta > 0: \frac{\eta^2}{\sigma^2} \le \log M^{\operatorname{loc}}(\eta)
    \right\},
\end{align}
determines the minimax rate (in the case of Gaussian noise), up to the diameter of the set.  This local entropy perspective was subsequently extended beyond convexity to star-shaped constraints in \cite{prasadan2026information}.  These results give a sharp answer to the statistical question, but they do not by themselves produce efficient algorithms for general constraint sets.

A different line of recent work asks when computationally natural estimators
are already optimal.  The least squares estimator is minimax optimal for many classical convex constraints, but it is not universally optimal.  In
\cite{prasadan2024some}, the risk of the LSE in the Gaussian sequence model was characterized through the behavior of local Gaussian widths, yielding necessary and sufficient conditions for minimax optimality and exhibiting several examples where the LSE is suboptimal.  In particular, it was argued that for \(\ell_p\) balls with \(1<p<2\), the LSE is suboptimal for suitable noise levels. Related phenomena were later studied in \cite{aolaritei2025revisiting}, who showed that the LSE over \(\ell_p\) balls can be rate-suboptimal in the range \(1<p<2\) for broad regimes of the parameters. These observations motivate the search for estimators that are both computationally simple and minimax near-optimal for symmetric norm type constraints. We note here that the case of $\ell_p$ balls for $1 \leq p < 2$ admits a solution via soft or hard thresholding; see, for example, the monograph of \cite{johnstone2011gaussian}. However, it is unclear whether these estimators remain near minimax optimal for any symmetric norm ball.

Recently, \cite{neykov2025polynomial} proposed polynomial-time
near-optimal estimators for certain origin-symmetric type-2 convex bodies,
under appropriate oracle access and regularity assumptions.  This provides a
general algorithmic framework for a large class of convex constraints.
However, the class of type-2 bodies does not uniformly cover the \(\ell_p\)
balls for \(1\le p<2\), which are among the most important examples in
sequence-space estimation.  Thus, even for the classical family of
\(\ell_p\) balls, there remains a gap between the information-theoretic
minimax theory and a general, easy-to-implement, near-optimal estimator.


In this work we propose a near-optimal polynomial time algorithm for any (well-balanced)\footnote{Here and throughout by well-balanced set $K$ we mean there exist known $r, R \in \RR_+$ so that $rB_2\subseteq K \subseteq RB_2$. We will see later that in fact we can compute such $(r,R)$ for any symmetric norm ball given only oracle access to its Minkowski functional.} symmetric norm ball provided that we have access to an oracle evaluating its Minkowski functional. The reason why our construction is possible and natural is the central result of \cite{edmunds1998entropy} which identifies the entropy numbers of the set $K$ in $\ell_2$ norm,
in terms of a soft-thresholding functional of the decreasing rearrangement of vectors in \(K\).  This result suggests that, for symmetric norm balls, the local entropy fixed point appearing in the minimax theory should be accessible through sparse approximation.

To state the relevant quantity, let \(a^*=(a_1^*,\ldots,a_n^*)\) denote the
nonincreasing rearrangement of \((|a_1|,\ldots,|a_n|)\).  For \(1\le s\le n\),
define the soft-tail functional
\begin{align*}
    u_s(K)
    :=
    \sup_{a\in K}
    \left(
        \sum_{j=1}^n \min\{a_j^*,a_s^*\}^2
    \right)^{1/2}.
\end{align*}
The theorem of Edmunds and Netrusov shows, in the relevant intermediate
regime, that the dyadic entropy numbers satisfy
\begin{align*}
    e_k(K) \asymp u_s(K),
    \qquad
    s = \lceil k/\log(1 + n/k)\rceil,
\end{align*}
for $k < n/2$. As the minimax rate of the problem is given by an entropic equation, and the local entropy of $K$ is intimately related to its entropy numbers, an estimator based on thresholding becomes very natural. We now describe our estimator (in an ideal case where we can compute the projections below in polynomial time).

Our estimator is a penalized sparse projection estimator.  For each sparsity
level \(s\), let \(T_s(Y)\) be the set of indices corresponding to the \(s\)
largest coordinates of \(Y\) in absolute value, and define
\begin{align*}
    \widehat\mu_s
    \in
    \arg\min_{\theta\in K,\ \operatorname{supp}(\theta)\subseteq T_s(Y)}
    \|Y-\theta\|_2^2 .
\end{align*}
We then choose the sparsity level by the penalized criterion
\begin{align*}
    \widehat s
    \in
    \arg\min_{0\le s\le n}
    \left\{
        \|Y-\widehat\mu_s\|_2^2
        +
        \lambda\sigma^2 s\log(en/s)
    \right\},
\end{align*}
where \(\lambda>0\) is a sufficiently large universal constant and set
\begin{align}\label{estimator:def:introduction}
        \widehat\mu=\widehat\mu_{\widehat s},
\end{align}
and with the convention that $0\cdot \log (en/0) = 0$.

The procedure is deliberately simple: it combines sorting, sparse projection, and a complexity penalty for the choice of support size.  For standard examples such as \(\ell_p\) balls, sparse projections are explicit or computationally inexpensive, and the estimator is clearly related to familiar hard thresholding-type procedures.

Our main result shows that this estimator is minimax near-optimal over every
symmetric norm ball.  

For \(\ell_p\) balls, \(1\le p<2\), the resulting estimator recovers the
classical nonlinear behavior known from the theory of Gaussian sequence and
wavelet estimation; see, for example, the monograph of
\cite{johnstone2011gaussian}.  Thus, the present work can be viewed as a
generalization of classical hard and soft thresholding ideas from \(\ell_p\)
balls to arbitrary symmetric norm balls.  At the same time, it gives an
algorithmic realization of the local entropy minimax theory: the estimator
does not require the construction of entropy packings or the solution of a
general nonconvex optimization problem, but instead exploits the symmetry of
the norm to reduce the problem to sparse approximation and penalized model
selection.

The remainder of the paper is organized as follows. Section~\ref{main:section} contains our estimator, main result, and its proof. Section~\ref{regression:section} extends the idea to a linear regression with moderate dimension. Finally, a brief discussion is given in Section~\ref{discussion:section}

\section{Estimator and Main Result}\label{main:section}
This section presents our main result. Before we begin, we give several useful definitions. 
\subsection{Notation and Definitions}
We will use the shorthand notation $d := \diam(K)$. For an integer $n\in \NN$ we use the convenient notation $[n] = \{1,\ldots,n\}$.  We use $\|\cdot\|_{\operatorname{op}}$ and $\|\cdot\|_F$ to denote the operator and Frobenius norms of a matrix respectively. We will denote the $n$-dimensional Euclidean unit ball with $B_2$. For a vector $x \in \RR^n$ we denote its support (i.e. collection of its non-zero entries) by $\operatorname{supp}(x)$. We denote the Minkowski functional (gauge) of an origin-symmetric convex body $K$ by $\|x\|_K = \inf\{t > 0 : x \in t K\}$. Note that $\|\cdot\|_K$ is a norm if the set $K$ is origin-symmetric. The first formal definition below is that of a symmetric norm. After the definition we give common examples of symmetric norms.
\begin{definition}[Symmetric norm]\label{symmetric:norm:def} A norm $\|\cdot\|_K$ is symmetric if for any $a \in \RR^n$ we have
\begin{align*}
    \bigg \|\sum_{i = 1}^n a_i e_i\bigg\|_K = \bigg\|\sum_{i = 1}^n \eta_i a_{\pi(i)} e_i\bigg\|_K,
\end{align*}
for any permutation $\pi$, and signs $\eta_i \in \{-1, 1\}$, where $\{e_i\}_{i \in [n]}$ denotes an orthonormal basis in $\RR^n$.
\end{definition}
Since we will be assuming that the basis $\{e_i\}_{i \in [n]}$ is known in this work, without loss of generality and as we mentioned in the introduction we will henceforth assume that $\{e_i\}_{i \in [n]}$ is the standard basis. 

\begin{remark}
Many commonly used constraints are unit balls of symmetric norms.  The most
basic examples are the $\ell_p$ norms, $1\le p\le \infty$, whose unit balls are
invariant under arbitrary permutations and sign changes of the coordinates.
Beyond the standard $\ell_p$ family, several other important constraints also
fall into this class.  For example, convex weak $\ell_p$ balls, as considered
in \cite{neykov2022minimax}, are symmetric norm balls, since their definition
depends only on the decreasing rearrangement of the coordinate magnitudes.
Similarly, the SLOPE penalty \cite{bogdan2015slope},
\[
    \|x\|_{\operatorname{SLOPE}}
    =
    \sum_{j=1}^n \lambda_j x_j^*,
    \qquad
    \lambda_1\ge \lambda_2\ge \cdots \ge \lambda_n\ge 0,
\]
where \(x_1^*\ge \cdots \ge x_n^*\) denotes the decreasing rearrangement of
\((|x_1|,\ldots,|x_n|)\), defines a symmetric norm.  This class also includes norms obtained by combining symmetric norms, such as
\[
    x \mapsto \alpha \|x\|_1+\beta \|x\|_2,
    \qquad \alpha,\beta\ge 0.
\]
Other examples include top-$k$ norms of the form
\[
    \|x\|_{(k)}=\sum_{j=1}^k x_j^*,
\]
and more general Lorentz-type norms, whose values are determined by the
ordered magnitudes of the coordinates.  Thus the class of symmetric norm balls is considerably richer than the classical $\ell_p$ examples, while still retaining enough structure to allow for efficient estimation.
\end{remark}

The following definition of local metric entropy plays a fundamental role in the determining the minimax rate. 

\begin{definition}[Local Metric Entropy] \label{definition:local:metric:entropy} Given a set $K$, define the $\eta$-packing number of $K$ as the maximum cardinality $M=M(\eta,K)$ of a set $\{\nu_1,\dots,\nu_{M}\}\subset K$ such that $\|\nu_i-\nu_j\|_2 > \eta$ for all $i\ne j$. Given a fixed sufficiently large constant $c > 0$, define the local metric entropy of $K$ as  $\log M_K^{\operatorname{loc}}(\eta)$ where $M_K^{\operatorname{loc}}(\eta) = \sup_{\nu\in K} M(\eta/c, (\nu + B_2\eta)\cap K)$. When clear from the context we drop the index $K$ from $M_K^{\operatorname{loc}}(\eta)$.
\end{definition}

As shown in \cite{neykov2022minimax} the local entropy is a non-increasing function in $\eta$. Next, we define the dyadic entropy numbers of $K$.

\begin{definition}[Dyadic Entropy Numbers of a Convex Body] The $k$-th dyadic entropy number of $K$ is given by
\begin{align*}
e_k(K)
:=
\inf\left\{
\varepsilon>0:
N(\varepsilon, K)\le 2^{k-1}
\right\},
\qquad k\ge 1,
\end{align*}
where \(N(\varepsilon, K)\) denotes the smallest number of Euclidean balls of radius \(\varepsilon\) needed to cover \(K\).
\end{definition}

We end this section by showing that any symmetric norm ball $K$ is ``well-balanced'' in the sense that we can compute $(r, R)$ so that $r B_2 \subseteq K \subseteq RB_2$ given oracle access to its Minkowski functional.

\begin{lemma}[Euclidean sandwich for symmetric norm balls]\label{well-balancedness-norm}
Let $\|\cdot\|_K$ be a norm on $\mathbb{R}^n$ whose unit ball
\[
K := \{x\in \mathbb{R}^n : \|x\|_K \le 1\}
\]
is sign- and permutation-invariant (i.e. it's symmetric). Suppose that we have oracle access to
$\|\cdot\|_K$, and let
\[
a := \|e_1\|_K .
\]
Then
\[
\frac{1}{a\sqrt n} B_2^n \subseteq K \subseteq \frac{\sqrt n}{a} B_2^n .
\]
In particular, one may take
\[
r = \frac{1}{a\sqrt n},
\qquad
R = \frac{\sqrt n}{a}.
\]
\end{lemma}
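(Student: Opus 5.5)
The plan is to compare $\|\cdot\|_K$ with the $\ell_1$ and $\ell_\infty$ norms, both scaled by $a$, and then pass to $\ell_2$ with the usual inequalities $\|x\|_1\le\sqrt n\|x\|_2$ and $\|x\|_\infty\ge\|x\|_2/\sqrt n$. By sign and permutation invariance, $\|e_i\|_K=\|e_1\|_K=a$ for every $i$. Also $a>0$, since $\|\cdot\|_K$ is a norm.

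\textbf{Inner inclusion.} For any $x=\sum_i x_ie_i$, the triangle inequality and homogeneity give
\[
\|x\|_K\le\sum_{i=1}^n|x_i|\,\|e_i\|_K=a\|x\|_1\le a\sqrt n\,\|x\|_2 .
\]
So if $\|x\|_2\le 1/(a\sqrt n)$, then $\|x\|_K\le 1$, which gives $\frac{1}{a\sqrt n}B_2^n\subseteq K$.

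\textbf{Outer inclusion.} This step carries the real content. I will show $\|x\|_K\ge a\|x\|_\infty$ by an averaging-over-signs argument. Let $i$ be an index with $|x_i|=\|x\|_\infty$. Let $x'$ be $x$ with the sign of every coordinate other than $i$ flipped; by sign invariance $\|x'\|_K=\|x\|_K$. Then $x+x'=2x_ie_i$, so the triangle inequality gives
\[
2|x_i|\,a=\|2x_ie_i\|_K\le\|x\|_K+\|x'\|_K=2\|x\|_K .
\]
Hence $\|x\|_K\ge a|x_i|=a\|x\|_\infty\ge a\|x\|_2/\sqrt n$. Consequently, if $x\in K$ then $\|x\|_2\le\sqrt n/a$, i.e.\ $K\subseteq\frac{\sqrt n}{a}B_2^n$.

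Finally, $a=\|e_1\|_K$ takes a single oracle call, so $r=1/(a\sqrt n)$ and $R=\sqrt n/a$ are computable as claimed. Permutation invariance is used only to make all the $\|e_i\|_K$ equal; sign invariance drives the lower bound. The only step needing an idea is the sign-flip averaging trick in the outer inclusion; everything else is routine.
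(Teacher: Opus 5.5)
Your proof is correct and follows the same route as the paper: bound $\|x\|_K$ above by $a\|x\|_1$ and below by $a\|x\|_\infty$, then pass to $\|x\|_2$. The only difference is the lower bound. The paper invokes coordinatewise monotonicity of symmetric norms without proof, while your sign-flip identity $x+x'=2x_ie_i$ gives a self-contained proof of exactly the special case needed.
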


The proof is relegated to the appendix. From now on, when we refer to $r, R$ we take them to equal to the values specified in Lemma \ref{well-balancedness-norm}. We note that this choice may not be optimal\footnote{In fact using John's theorem one can easily show that the ratio between the radii of the smallest Euclidean ball containing $K$ divided by the radius of the largest Euclidean ball contained in $K$ is always $\leq \sqrt{n}$.}, nevertheless this does not affect the rates as we shall see.

\subsection{Estimator and Main Result}

Recall that for Gaussian noise the minimax rate as established in \cite{neykov2022minimax} for convex (and even star-shaped sets see \cite{prasadan2026information}) is given by $\eta^*$ defined in \eqref{minimax:rate}. We begin by establishing a simple result, relating the solution of the entropy equation to a minimization problem involving the entropy numbers.

\begin{lemma}\label{first:lemma}
    Let $n \geq 10$. We have that $\eta^{*2} \lceil\log_c (1 + d/\sigma)\rceil \gtrsim (\min_{k \in [\lfloor n/2\rfloor - 1]} e_k^2(K) + k \sigma^2) \wedge n \sigma^2 \wedge d^2$. On the other hand if $n < 10$ then $\eta^{*2} \asymp d^2 \wedge \sigma^2$.
\end{lemma}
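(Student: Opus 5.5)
Write $L := \lceil\log_c(1+d/\sigma)\rceil$. The plan is to compare the local entropy at scale $\eta^*$ with a global covering number, by chaining the local entropy over the roughly $L$ dyadic (in base $c$) scales between $d$ and $\sigma$. The basic fact I will use is the standard telescoping bound for convex $K$: for any $\varepsilon\le d$,
\[
\log N(\varepsilon, K)\le \sum_{j=0}^{J}\log M^{\operatorname{loc}}(d/c^{j}) ,\qquad J=\lceil\log_c(d/\varepsilon)\rceil .
\]
This holds because a maximal $\eta/c$-packing of each $\eta$-ball intersected with $K$ refines a cover at scale $\eta$ into a cover at scale $\eta/c$, at multiplicative cost $M^{\operatorname{loc}}(\eta)$.

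The key steps are as follows.

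\textbf{Step 1 (easy regimes).} If $\eta^*\gtrsim d$, the bound holds trivially, since the right-hand side is at most $d^2$. So assume $\eta^*\lesssim d$. If also $\eta^{*2}\gtrsim n\sigma^2$, the bound is again immediate from the $n\sigma^2$ term. Hence assume $\eta^{*2}\lesssim n\sigma^2$.

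\textbf{Step 2 (chaining down to $\eta^*$).} Apply the telescoping bound with $\varepsilon=\eta^*/c$, which is at most of order $c\eta^*$. For scales $\eta>\eta^*$ the definition of $\eta^*$ gives $\log M^{\operatorname{loc}}(\eta)<\eta^2/\sigma^2$. For the scales between $\eta^*$ and $c\eta^*$, monotonicity of the local entropy together with the fixed-point property bounds each term by roughly $\eta^{*2}/\sigma^2$. The scales above $\eta^*$ are not geometrically summable in the right direction, so I bound each by its value at $\eta^*$. More precisely, monotonicity gives $\log M^{\operatorname{loc}}(d/c^j)\le\log M^{\operatorname{loc}}(\eta^*)\lesssim \eta^{*2}/\sigma^2$, up to a constant $c^2$ at the fixed point. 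Hence
\[
\log_2 N(\eta^*, K)\lesssim L\,\eta^{*2}/\sigma^2 =: k_0 .
\]
The factor $L$ in the statement comes exactly from this crude per-scale bound.

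\textbf{Step 3 (translate to entropy numbers).} Let $k = \lceil k_0\rceil+1$. Then $N(\eta^*,K)\le 2^{k-1}$, so $e_k(K)\le \eta^*$ and $k\sigma^2\lesssim L\eta^{*2}+\sigma^2$. If $k\le\lfloor n/2\rfloor-1$, then
\[
\min_k\bigl(e_k^2+k\sigma^2\bigr)\lesssim L\eta^{*2}+\sigma^2 .
\]
If instead $k>\lfloor n/2\rfloor-1$, then $L\eta^{*2}\gtrsim n\sigma^2$ (using $n\ge10$), and the $n\sigma^2$ branch of the minimum applies. In either case the remaining issue is the stray additive $\sigma^2$. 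To absorb it I will show $\eta^{*2}\gtrsim \sigma^2\wedge d^2$. Any convex set of diameter $d$ contains a segment of length $d$, so $\log M^{\operatorname{loc}}(\eta)\ge \log 2$ for all $\eta\lesssim d$, taking $c\ge 2$. Hence $\eta^*\gtrsim \sigma\wedge d$, and combining the cases gives the claimed inequality.

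\textbf{Step 4 ($n<10$).} The upper bound $\eta^*\lesssim d$ is immediate, since for $\eta\ge d$ the local entropy is that of the whole of $K$. For $\eta^{*2}\lesssim\sigma^2$ in fixed dimension $n$, I use the volumetric bound $M^{\operatorname{loc}}(\eta)\le (1+2c)^n$, which is $O(1)$ because $n<10$; the fixed-point equation then forces $\eta^2/\sigma^2\lesssim 1$. The matching lower bound $\eta^*\gtrsim \sigma\wedge d$ is the segment argument from Step 3.

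\textbf{Main obstacle.} I expect the main difficulty to be the bookkeeping in Step 2. One has to make sure the telescoping really only costs the factor $L$, rather than $\log(d/\eta^*)$, which could be larger than $L$ when $\eta^*\ll\sigma$. Handling this requires the lower bound $\eta^*\gtrsim\sigma\wedge d$ first, so that $\log_c(d/\eta^*)\lesssim L$. I would therefore establish that lower bound before carrying out the chaining.
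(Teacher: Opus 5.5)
Your proposal is correct and is essentially the paper's argument. Both chain the local entropy over the roughly $\lceil\log_c(d/\eta^*)\rceil$ scales above $\eta^*$; the paper does this with the Yang--Barron packing inequality $\log_2 M(\delta/c)-\log_2 M(\delta)\le\log_2 M^{\operatorname{loc}}(\delta)$ at $\delta_j=c^j\eta^*$, which is the packing analogue of your covering refinement. Both then take $k=\lceil\log_2 M(\eta^*)\rceil+1$ so that $e_k(K)\le\eta^*$, split on whether $k\le\lfloor n/2\rfloor-1$, and use $\eta^*\gtrsim\sigma\wedge d$ to absorb the stray $\sigma^2$ and to pass from $\log_c(d/\eta^*)$ to $L$; the paper cites this lower bound, while you prove it directly with a segment.

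Step 2 needs a bookkeeping repair in two places:
\begin{enumerate}
\item Stop the refinement at scale $\eta^*$. Keep only the factors $M^{\operatorname{loc}}(d/c^j)$ with $d/c^j>\eta^*$; these already give an $\eta^*$-cover. Your choice $\varepsilon=\eta^*/c$, with the sum running up to $j=J$, brings in scales $\le\eta^*$, where the local entropy is not controlled.
\item Bound each retained term by $\log M^{\operatorname{loc}}(\eta)\le\log M^{\operatorname{loc}}(\eta')<\eta'^2/\sigma^2$ for $\eta^*<\eta'\le\eta$, and let $\eta'\downarrow\eta^*$. Do not go through $\log M^{\operatorname{loc}}(\eta^*)$ itself, which can exceed $\eta^{*2}/\sigma^2$ if the local entropy jumps at $\eta^*$.
\end{enumerate}
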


\begin{proof}

First let $n \geq 10$.

Set $k^* = \lceil \log_2 M(\eta^*) \rceil + 1$. First, suppose $k^* \leq \lfloor n/2 \rfloor - 1$. Then we conclude
\begin{align*}
    \min_{k \in [\lfloor n/2\rfloor - 1]} e_k^2(K) + k\sigma^2 \leq e_{k^*}^2(K) +  k^*\sigma^2 \leq \eta^{*2} + (\log_2 M(\eta^*) + 2) \sigma^2,
\end{align*}
where we used $k^* - 1 = \lceil \log_2 M(\eta^*) \rceil \geq \log_2 M(\eta^*) \geq \log_2 N(\eta^*)$, and the definition of dyadic entropy numbers. Next we recall Lemma 3 of \cite{yang1999information}, which yields:
\begin{align}\label{yang:barron:bound}
    \log_2 M(\delta/c) - \log_2 M(\delta) \leq \log_2 M^{\operatorname{loc}}(\delta)
\end{align}
Setting $\delta_j = c^j \eta^*$ for $j = 1, \ldots, \lceil \log_c(d/\eta^*)\rceil$\footnote{If $\eta^* \geq d$, the result is immediate as the right hand side is at most $d^2$; hence we assume $\eta^ < d$.} and summing the corresponding inequalities, we obtain:
\begin{align}\label{logM:ineq}
    \log_2 M( \eta^*) &\leq \sum_{j = 1}^{\lceil \log_c(d/\eta^*)\rceil}  \log_2 M(\delta_j/c) - \log_2 M(\delta_j) \leq \sum_{j = 1}^{\lceil \log_c(d/\eta^*)\rceil}  \log_2 M^{\operatorname{loc}}(\delta_j) \nonumber \\
    & \lesssim \lceil \log_c(d/\eta^*)\rceil \eta^{*2}/\sigma^2,
\end{align}
where we used the non-increasing nature of local entropy, and the fact that $\log M(\delta_{\lceil \log_c(d/\eta^*)\rceil}) = 0$. Therefore
\begin{align*}
    (\min_{k \in [\lfloor n/2\rfloor - 1]} e_k^2(K) + k \sigma^2) \wedge n\sigma^2 \wedge d^2 & \lesssim \eta^{*2} + \lceil (\log_c(d/\eta^*)\rceil \eta^{*2} + 2 \sigma^2) \wedge d^2\\
    &\lesssim \eta^{*2} + \lceil \log_c(d/\eta^*)\rceil \eta^{*2} + (2 \sigma^2) \wedge d^2\\
    &\lesssim \lceil \log_c(d/\eta^*)\rceil \eta^{*2}, 
\end{align*}
where we used the fact that $\eta^* \gtrsim \sigma \wedge d$ \cite[see Lemma 1.4 of][e.g.]{prasadan2024some}.

Next suppose $k^* \geq \lfloor n/2\rfloor $. By \eqref{logM:ineq}, we immediately obtain $ \lceil \log_c(d/\eta^*)\rceil \eta^{*2} \geq (\lfloor n/2\rfloor-1)\sigma^2$. Thus for $n\geq 10$ we have 
\begin{align*}
    \lceil \log_c(d/\eta^*)\rceil \eta^{*2} & \gtrsim n\sigma^2 \geq (\min_{k \in [\lfloor n/2\rfloor - 1]} e_k^2(K) + k \sigma^2) \wedge n \sigma^2 \wedge d^2.
\end{align*}

If $n < 10$ the minimax rate is $\eta^{*2} \asymp d^2 \wedge \sigma^2$ (see for instance \cite[Lemma 1.4 of][e.g.]{prasadan2024some} for the lower bound, and the upper bound is clear upon noticing that the $\log M^{\operatorname{loc}}(\delta) \lesssim n$ thus $\eta^{*2} \lesssim n \sigma^2$ always and the upper bound of $d^2$ is obvious).
\end{proof}

We propose the following estimator, for a tuning parameter $\lambda > 0$:

\begin{align}\label{main:theory:estimator}
    \widehat \mu \in \argmin_{\nu \in K} \|Y - \nu\|_2^2 + \lambda \sigma^2 \|\nu\|_0 \log (e n/ \|\nu\|_0),
\end{align}
where with a slight abuse of notation we denote the cardinality of the non-zero entries of $\nu$ with $\|\nu\|_0$.\footnote{It may not be obvious why this estimator coincides with the one described in \eqref{estimator:def:introduction} in the introduction. However, this will  become apparent soon. } Here we use the convention that our penalty equals $0$ in case that $\nu = 0$ vector (i.e., $\|\nu\|_0 = 0$).

We first show that the estimator can be approximately computed in polynomial time under a single assumption on \(K\): we need oracle access to its Minkowski gauge. Furthermore, recall that $K$ is 
well-balanced, i.e., we can compute constants \(r,R>0\) such that
\begin{align*}
    rB_2 \subseteq K \subseteq RB_2.
\end{align*}

First let $T_s(Y)$ denote the set of indices corresponding to the largest in magnitude $|Y_i|$, breaking ties lexicographically. We define the at most $s$-sparse estimator $\widehat \mu_s \in \argmin_{\nu \in K, \nu_i = 0 \mbox{ \scriptsize for } i \in T^c_s(Y)} \|Y - \nu\|_2^2$. Next we have 
\begin{align}\label{theoretical:s:tilde}
    \widehat s = \argmin_{s \in \{0,\ldots, n\}} \|Y- \widehat \mu_s\|_2^2 + \lambda \sigma^2 s \log (en/s).
\end{align}

Assuming we can compute each of the $\widehat \mu_s$ for each $s \in \{0,\ldots, n\}$ the following lemma shows that $\widehat \mu = \widehat \mu_{\widehat s}$.

\begin{lemma}
    We have $\widehat \mu_{\widehat s} \in \argmin_{\nu \in K} \|Y - \nu\|_2^2 + \lambda \sigma^2 \|\nu\|_0 \log (e n/ \|\nu\|_0)$.
\end{lemma}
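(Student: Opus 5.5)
The plan is to show that for every feasible $\nu \in K$ there is some $s$ with
\[
\|Y-\widehat\mu_s\|_2^2 + \lambda\sigma^2 s\log(en/s) \le \|Y-\nu\|_2^2 + \lambda\sigma^2\|\nu\|_0\log(en/\|\nu\|_0).
\]
Combined with the definition of $\widehat s$ in \eqref{theoretical:s:tilde}, this yields the claim. Two observations are needed. First, each $\widehat\mu_s$ is itself feasible for \eqref{main:theory:estimator}. Second, its objective value is at most the $s$-th criterion value: $\|\widehat\mu_s\|_0\le s$ and $t\mapsto t\log(en/t)$ is nondecreasing on $[0,n]$, since its derivative is $\log(n/t)\ge 0$. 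So $\widehat\mu_{\widehat s}$ attains an objective value at most $\min_s(\cdot)$, and that minimum is at most $\inf_{\nu\in K}(\cdot)$. Hence $\widehat\mu_{\widehat s}$ is a minimizer. Existence of a minimizer is automatic here, because the penalty takes finitely many values and $K$ is compact.

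The key step is the following. Fix $\nu\in K$ with $s:=\|\nu\|_0$ and support $S$. I would show that there is $\nu'\in K$ supported on $T_s(Y)$ with $\|Y-\nu'\|_2\le\|Y-\nu\|_2$. Then, by the definition of $\widehat\mu_s$,
\[
\|Y-\widehat\mu_s\|_2^2\le\|Y-\nu'\|_2^2\le \|Y-\nu\|_2^2,
\]
and the penalties coincide, which gives the display above.

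To build $\nu'$, I would use the symmetry of the norm in two stages.

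\emph{Sign alignment.} Let $\tilde\nu_i=\sign(Y_i)|\nu_i|$, using any fixed sign when $Y_i=0$. This vector lies in $K$ by sign invariance, has the same support, and satisfies $(Y_i-\tilde\nu_i)^2=(|Y_i|-|\nu_i|)^2\le (Y_i-\nu_i)^2$.

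\emph{Support transport.} Choose a bijection $\pi:S\to T_s(Y)$ that fixes $S\cap T_s(Y)$ and maps $S\setminus T_s(Y)$ onto $T_s(Y)\setminus S$. Extend it to a permutation of $[n]$ and set $\nu'_{\pi(i)}=\sign(Y_{\pi(i)})|\nu_i|$ for $i\in S$, with all other entries zero. Then $\nu'\in K$ by permutation and sign invariance, and $\operatorname{supp}(\nu')\subseteq T_s(Y)$. It remains to compare losses, via a pairwise exchange. For each moved pair $i\in S\setminus T_s(Y)$ and $j=\pi(i)\in T_s(Y)\setminus S$, we have $|Y_j|\ge|Y_i|$. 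Write $b=|\nu_i|$. The contribution of coordinates $\{i,j\}$ to the loss changes from $(|Y_i|-b)^2+Y_j^2$ to $Y_i^2+(|Y_j|-b)^2$. The difference, new minus old, equals $2b(|Y_i|-|Y_j|)\le 0$. Summing over pairs, and noting that the coordinates outside the moved pairs are unchanged, gives $\|Y-\nu'\|_2^2\le\|Y-\tilde\nu\|_2^2\le\|Y-\nu\|_2^2$.

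The main (mild) obstacle is bookkeeping rather than any real difficulty. One point is ties in $|Y_i|$, where the lexicographic tie-breaking still guarantees $|Y_j|\ge|Y_i|$ for $j\in T_s(Y)$ and $i\notin T_s(Y)$. The other is checking that the exchange argument is a genuine coordinatewise sum over disjoint pairs. The case $s=0$ is trivial, with $\nu=0=\widehat\mu_0$ and the stated convention for the penalty.
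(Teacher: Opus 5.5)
Your proof is correct and is essentially the paper's argument. The paper expands $\|Y-\nu\|_2^2=\|Y\|_2^2-2\langle Y,\nu\rangle+\|\nu\|_2^2$ and applies the rearrangement inequality under signed permutations; your sign alignment plus pairwise exchange (whose gain $2b(|Y_j|-|Y_i|)$ is exactly the change in $2\langle Y,\nu\rangle$) proves by hand the instance of that inequality that is needed, and you also make explicit, via monotonicity of $t\mapsto t\log(en/t)$ on $[0,n]$, the step the paper leaves implicit when $\|\widehat\mu_s\|_0<s$.
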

\begin{proof}
The statement follows from the following claim: for a fixed sparsity $s$, the solution to the problem $\widehat \mu_s \in \argmin_{\nu \in K, \|\nu\|_0\leq s} \|Y - \nu\|_2^2$. Observe that $\|Y - \nu\|_2^2 = \|Y\|_2^2 - 2 \langle Y, \nu \rangle + \|\nu\|_2^2$. Next, by the symmetry of $K$ we can permute and sign-flip all coordinates of $\nu$. Since $\|\cdot \|_2$ is also a symmetric norm this does not affect the term $\|\nu\|_2^2$, and the term $\|Y\|_2^2$ is just a constant. Hence  the claim follows by the rearrangement inequality.
\end{proof}

This lemma shows that to solve \eqref{main:theory:estimator}, there is no need to scan over all $2^n$ possible supports. Instead, we can solve the following $n$ problems: For $s \in [n]$\footnote{In fact it suffices to solve the problems on a dyadic grid $s \in \{1,2,4,\ldots, 2^{\lfloor \log_2 n \rfloor}\}$, but for simplicity we let $s$ range in $[n]$.}
\begin{align*}
      \widehat \mu_s \in \argmin \|Y - x\|^2_2, \mbox{ s.t. } x \in K, ~ \operatorname{supp}(x) \subseteq T_s(Y).
\end{align*}
Let  $K_s := \{x_{T_s(Y)} : x \in K, ~ \operatorname{supp}(x) \subseteq T_s(Y)\} \subset \RR^s$. Thus our optimization above can be written as:
\begin{align*}
     \widehat \mu_{s, T_s(Y)} \in \argmin \|Y_{T_s(Y)} - x\|^2_2, \mbox{ s.t. } x \in K_s.
\end{align*}

Since we know the set $K$ is balanced and its gauge is symmetric, it is clear that the convex set $K_s$ is also $(r, R)$ balanced. Further, it is clear that we can evaluate the gauge of $K_s$ efficiently. Since $\|.\|_2$ is a convex $1$-Lipschitz function, it follows by Theorem 2.5.9 of \cite{dadush2012integer} that there exists a polynomial-time algorithm, which can approximate the projection, i.e., we can compute a point $\tilde \mu_s \in K_s$ such that 
\begin{align*}
    \|Y_{T_s(Y)} - \tilde \mu_{s, T_s(Y)}\|_2 & \leq  \|Y_{T_s(Y)}  - \widehat \mu_{s, T_s(Y)}\|_2 + \epsilon,
\end{align*} 
for any $\epsilon > 0$ where the number of iterations required to achieve $\epsilon$-accuracy is proportional to $\log (R/\epsilon)$ (see also \cite{lee2018efficient} for a similar result). Set $\epsilon$ to an appropriate value ($\epsilon = (\sigma\wedge d) \wedge \frac{\sigma^2\wedge d^2}{\|Y\|_2 + R}$)\footnote{It is simple to see that setting $\epsilon$ to this value results in a number of iterations that depends polynomially on $n$ with high probability. In the case when $\|Y\|_2$ is too large one can output the $0$ point. This does not cause issues in the minimax rate since $\|Y\|_2 \lesssim R + \sqrt{n}\sigma$ with high probability.}.

In place of \eqref{theoretical:s:tilde}, let us define
\begin{align*}
    \tilde s = \argmin_{s \in \{0,\ldots, n\}} \|Y- \tilde \mu_s\|_2^2 + \lambda \sigma^2 s \log (en/s),
\end{align*}
where $\tilde \mu_s$ has the same non-zero entries as $\tilde \mu_{s, T_s(Y)}$ and $\operatorname{supp}(\tilde \mu_s) \subseteq T_s(Y)$. Next, define the computable estimator $\tilde \mu = \tilde \mu_{\tilde s}$. This is our estimator of $\mu$, \textbf{in the case when $\sigma \geq r/\sqrt{n}$}. \textbf{If $\sigma < r/\sqrt{n}$} we set $\tilde \mu = Y$. We have the following result regarding $\tilde \mu$.
\begin{theorem}[$\tilde \mu$ is Near-Optimal]\label{main:result} Let $\eta^*$ denote the critical radius defined in \eqref{minimax:rate}. Then if $\lambda \asymp c_0$ for some absolute constant $c_0 > 0$ we have:
\begin{align*}
    \sup_{\mu \in K} \EE_\mu \|\tilde \mu - \mu\|_2^2 \lesssim  \eta^{*2}\log(2n) \wedge d^2 \wedge n\sigma^2,
\end{align*}
where $\lesssim$ hides universal constants.
\end{theorem}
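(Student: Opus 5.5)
We prove an oracle inequality for the penalized estimator and then bound the oracle quantity by the entropy-number expression from Lemma~\ref{first:lemma}. The trivial regimes come first. If $\sigma < r/\sqrt n$ then $\tilde\mu = Y$, so the risk is at most $n\sigma^2$. Since $rB_2\subseteq K$, every local packing at scale $\eta\le r$ of a ball around $0$ has size $e^{\Omega(n)}$. This gives $\eta^{*2}\gtrsim n\sigma^2$ (up to $d^2$), so the bound holds. When $n<10$ a direct argument suffices: $\tilde\mu\in K$ gives risk at most $d^2$, and a risk of order $\sigma^2$ follows from the oracle inequality below with $s\le n$ constant. Combined with Lemma~\ref{first:lemma}, this settles that case. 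The bounds $d^2$ and $n\sigma^2$ hold throughout. The first holds because $\tilde\mu,\mu\in K$. For the second, the oracle inequality applied with $\nu=\mu$ and $s=n$ gives risk $\lesssim n\sigma^2$. Note $n\log(e)=n$ in the penalty.

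The core step is a standard model-selection oracle inequality (à la Birgé--Massart). For any $\nu\in K$ with $\|\nu\|_0=s$, the basic inequality for \eqref{main:theory:estimator}, with the additive $\epsilon$-slack from the approximate projections, gives
\[
\|\tilde\mu-\mu\|_2^2 \le \|\nu-\mu\|_2^2 + 2\langle \xi,\tilde\mu-\nu\rangle + \lambda\sigma^2 s\log(en/s) - \lambda\sigma^2 \hat s\log(en/\hat s) + O(\epsilon(\|Y\|_2+R)).
\]
We control $\langle\xi,\tilde\mu-\nu\rangle$ uniformly over supports of size at most $\hat s+s$. The vector $\tilde\mu-\nu$ lies in the union over $\binom{n}{\hat s+s}$ coordinate subspaces. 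For a fixed subspace of dimension $m$, $\sup_u\langle\xi,u\rangle/\|u\|_2$ is sub-Gaussian around $\sqrt m\,\sigma$. A union bound with weights $e^{-C m\log(en/m)}$ then yields
\[
2\langle\xi,\tilde\mu-\nu\rangle\le \tfrac12\|\tilde\mu-\nu\|_2^2 + C\sigma^2 (\hat s+s)\log(en/(\hat s+s)) + \sigma^2 Z,
\]
with $\EE Z\lesssim 1$. For $\lambda$ a large enough absolute constant, the $\hat s$ term is absorbed by the penalty. After also absorbing $\|\tilde\mu-\nu\|_2^2\le 2\|\tilde\mu-\mu\|^2+2\|\nu-\mu\|^2$, we obtain
\[
\EE\|\tilde\mu-\mu\|_2^2\lesssim \min_{s}\Big\{\inf_{\nu\in K,\|\nu\|_0\le s}\|\nu-\mu\|_2^2 + \sigma^2 s\log(en/s)\Big\} + \sigma^2\wedge d^2.
\]
The $\epsilon$-term is negligible by the choice of $\epsilon$, working on the high-probability event $\|Y\|_2\lesssim R+\sqrt n\sigma$ and using the fallback to $0$ otherwise. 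The additive $\sigma^2$ is $\lesssim\eta^{*2}$ since $\eta^*\gtrsim\sigma\wedge d$.

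The approximation step is the main obstacle. We must show that for every $\mu\in K$ and each $k\le n/2$, with $s=\lceil k/\log(1+n/k)\rceil$, the best $s$-term approximation of $\mu$ within $K$ has error $\lesssim u_s(K)$. We also need $s\log(en/s)\lesssim k$. The natural candidate is the soft-thresholded vector $\nu_j=\sign(\mu_j)(|\mu_j|-\mu_s^*)_+$. It has at most $s$ nonzero entries, it lies in $K$ because it is coordinatewise dominated in magnitude by $\mu$ (symmetric norms are monotone), and $\|\mu-\nu\|_2^2=\sum_j\min\{\mu_j^*,\mu_s^*\}^2\le u_s(K)^2$. This is exactly the soft-tail functional, which is why it appears. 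Invoking the Edmunds--Netrusov equivalence $u_s(K)\lesssim e_k(K)$ then gives, for every $k\in[\lfloor n/2\rfloor-1]$, risk $\lesssim e_k^2(K)+k\sigma^2$. Together with the $n\sigma^2$ and $d^2$ bounds, the risk is at most $(\min_k e_k^2+k\sigma^2)\wedge n\sigma^2\wedge d^2$.

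Finally, Lemma~\ref{first:lemma} converts this into $\eta^{*2}\lceil\log_c(1+d/\sigma)\rceil$. It remains to replace the $\log(d/\sigma)$ factor by $\log(2n)$. When $d/\sigma\gtrsim n^{C}$, i.e.\ $\sigma\lesssim d\,n^{-C}$, we have $d\le R=\sqrt n/a$ and $r=1/(a\sqrt n)$, so $\sigma< r/\sqrt n$ for a suitable $C$, which is the regime handled at the start. Otherwise $\log(d/\sigma)\lesssim\log(2n)$. The delicate points to verify are the monotonicity of symmetric norms, the exact form and range of the Edmunds--Netrusov bound $u_s\lesssim e_k$, and this last log-factor reduction.
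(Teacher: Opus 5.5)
Your overall route is the paper's: the basic inequality with the $\epsilon$-slack, a union bound over supports giving $\EE\|\tilde\mu-\mu\|_2^2\lesssim\inf_s\{\inf_{x\in C_s}\|\mu-x\|_2^2+\sigma^2 s\log(en/s)\}$, a thresholding approximation combined with Edmunds--Netrusov, Lemma~\ref{first:lemma}, and the reduction $d/\sigma\le 2R\sqrt n/r=2n^{3/2}$ when $\sigma\ge r/\sqrt n$. Your soft-thresholded $\nu$ in place of the paper's hard-thresholded vector is fine. It is coordinatewise dominated by $\mu$, so it lies in $K$, and its error is exactly $u_s(\mu)$; the paper instead uses $h_s(\mu)\le u_s(\mu)$.

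The step that fails as written is your claim that $s\log(en/s)\lesssim k$ for $s=\lceil k/\log(1+n/k)\rceil$ and every $k\in[\lfloor n/2\rfloor-1]$. When $k\le\log(1+n/k)$ you get $s=1$, so $s\log(en/s)=\log(en)$, which is not $O(k)$ (take $k=1$). Lemma~\ref{simple:calculation} gives the correct statement $s\log(en/s)\asymp k+\log(en)$. Consequently your intermediate bound ``risk $\lesssim e_k^2(K)+k\sigma^2$ for every $k$'' is not justified by your argument when $k\lesssim\log(en)$.

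The paper handles this by restricting to $k\ge\log(en)$ and comparing the two minimizers. This costs an extra $(\log(en)\sigma^2)\wedge d^2\wedge n\sigma^2$, which is absorbed into $\eta^{*2}\log(2n)$ using $\eta^*\gtrsim\sigma\wedge d$, the same fact you already use for the additive $\sigma^2\wedge d^2$. Alternatively you can rescue your claim directly. When $s=1$, $u_1(K)=\sup_{a\in K}\|a\|_2=d/2$, so Edmunds--Netrusov gives $e_k(K)\asymp d$, and the trivial bound $\EE\|\tilde\mu-\mu\|_2^2\le d^2\lesssim e_k^2(K)$ covers those $k$.

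There is also a smaller constant slip. From $2\langle\xi,\tilde\mu-\nu\rangle\le\frac12\|\tilde\mu-\nu\|_2^2+\cdots$, the bound $\|\tilde\mu-\nu\|_2^2\le 2\|\tilde\mu-\mu\|_2^2+2\|\nu-\mu\|_2^2$ puts $\|\tilde\mu-\mu\|_2^2$ on the right with coefficient $1$, so nothing can be absorbed. Use coefficient $\frac14$ (the paper's $\kappa=1/4$) and enlarge $\lambda$ accordingly.
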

\begin{proof}[Proof of Theorem \ref{main:result}]
First, suppose $\sigma < r/\sqrt{n}$. As shown in \cite[Lemma 2.10]{neykov2025polynomial} in that case the rate $\eta^{*2} \asymp n \sigma^2$ and is therefore achieved exactly by $Y$ as $\EE_{\mu} \|Y - \mu\|_2^2 = \sum \operatorname{Var}(\xi_i) \leq n \sigma^2$.

Now we assume the more interesting case when $\sigma \geq r/\sqrt{n}$. For any $s \in [n]$ (for $s = 0$ $\tilde \mu_0 = \widehat \mu_0 = 0$):
\begin{align*}
    \|Y - \tilde \mu_s\|_2^2 & \leq  \|Y - \widehat \mu_s\|_2^2 + 2 \epsilon  \|Y - \widehat \mu_s\|_2 + \epsilon^2 \leq \|Y - \widehat \mu_s\|_2^2 + 2 \epsilon (\|Y\|_2 + \|\widehat \mu_s\|_2) + \epsilon^2 \\
    & \leq \|Y - \widehat \mu_s\|_2^2 + 3(\sigma^2\wedge d^2).
\end{align*}

For an integer $s \in \{0,\ldots, n\}$ define the shorthand $\operatorname{pen}(s) = \lambda \sigma^2 s \log (en/s)$ with the convention that $\operatorname{pen}(0) = 0$. Let \(x\in C_s\) be any \(s\)-sparse approximation to \(\mu\), where
\begin{align*}
    C_s:=K\cap\{z\in\mathbb R^n:\|z\|_0\le s\}.
\end{align*}
Let
\begin{align*}
    \widehat s:=\|\widehat\mu\|_0,
\end{align*}
where recall that $\widehat\mu$ was defined in \eqref{main:theory:estimator}. Since \(\widehat\mu\) minimizes the penalized criterion, we have
\begin{align*}
    \|Y-\widehat\mu\|_2^2+\operatorname{pen}(\widehat s)
    \le
    \|Y-x\|_2^2+\operatorname{pen}(s).
\end{align*}
Hence
\begin{align*}
    \|Y-\tilde\mu\|_2^2+\operatorname{pen}(\tilde s) 
    &\leq
    \|Y-\tilde\mu_{\widehat s}\|_2^2+\operatorname{pen}(\widehat s)
    \\&\le
    \|Y-\widehat \mu_{\widehat s}\|_2^2+\operatorname{pen}(\widehat s) + 3(\sigma^2\wedge d^2)
    \\&\le
    \|Y-x\|_2^2+\operatorname{pen}(s) + 3(\sigma^2\wedge d^2).
\end{align*} 

Using \(Y=\mu+\xi\), this becomes
\begin{align*}
    \|\tilde\mu-\mu-\xi\|_2^2+\operatorname{pen}(\tilde s)
    \le
    \|x-\mu-\xi\|_2^2+\operatorname{pen}(s) + 3(\sigma^2\wedge d^2).
\end{align*}
Expanding both squared norms gives
\begin{align*}
    \|\tilde\mu-\mu\|_2^2
    -2\langle \xi,\tilde\mu-\mu\rangle
    +\|\xi\|_2^2
    +\operatorname{pen}(\tilde s)
    \le
    \|x-\mu\|_2^2
    -2\langle \xi,x-\mu\rangle
    +\|\xi\|_2^2
    +\operatorname{pen}(s) + 3(\sigma^2\wedge d^2).
\end{align*}
Canceling \(\|\xi\|_2^2\) from both sides yields
\begin{align*}
    \|\tilde\mu-\mu\|_2^2
    +\operatorname{pen}(\tilde s)
    \le
    \|x-\mu\|_2^2
    +\operatorname{pen}(s)
    +2\langle \xi,\tilde\mu-x\rangle + 3(\sigma^2\wedge d^2).
\end{align*}
Thus the basic inequality is
\begin{align*}
    \boxed{
    \|\tilde\mu-\mu\|_2^2
    \le
    \|x-\mu\|_2^2
    +\operatorname{pen}(s)
    -\operatorname{pen}(\tilde s)
    +2\langle \xi,\tilde\mu-x\rangle + 3(\sigma^2\wedge d^2).
    }
\end{align*}

It remains to control the stochastic term. Let
\begin{align*}
    S:=\operatorname{supp}(x),
    \qquad
    \tilde S:=\operatorname{supp}(\tilde\mu).
\end{align*}
Then
\begin{align*}
    \operatorname{supp}(\tilde\mu-x)\subseteq S\cup \tilde S,
\end{align*}
and hence
\begin{align*}
    |S\cup \tilde S|\le s+\tilde s.
\end{align*}
Therefore
\begin{align*}
    \langle \xi,\tilde\mu-x\rangle
    =
    \langle P_{S\cup\tilde S}\xi,\tilde\mu-x\rangle
    \le
    \|P_{S\cup\tilde S}\xi\|_2\,
    \|\tilde\mu-x\|_2,
\end{align*}
with $P_{S\cup \tilde S}$ being the projection on the union of the two support sets $S$ and $\tilde S$. Using \(2ab\le \kappa a^2+\kappa^{-1}b^2\), for any \(\kappa>0\),
\begin{align*}
    2\langle \xi,\tilde\mu-x\rangle
    \le
    \kappa\|\tilde\mu-x\|_2^2
    +
    \kappa^{-1}\|P_{S\cup\tilde S}\xi\|_2^2.
\end{align*}
Moreover,
\begin{align*}
    \|\tilde\mu-x\|_2^2
    \le
    2\|\tilde\mu-\mu\|_2^2
    +
    2\|x-\mu\|_2^2.
\end{align*}
Thus
\begin{align*}
    2\langle \xi,\tilde\mu-x\rangle
    \le
    2\kappa\|\tilde\mu-\mu\|_2^2
    +
    2\kappa\|x-\mu\|_2^2
    +
    \kappa^{-1}\|P_{S\cup\tilde S}\xi\|_2^2.
\end{align*}
Substituting this into the basic inequality gives
\begin{align*}
    \|\tilde\mu-\mu\|_2^2
    \le
    \|x-\mu\|_2^2
    +\operatorname{pen}(s)
    -\operatorname{pen}(\tilde s)
    +
    2\kappa\|\tilde\mu-\mu\|_2^2
    +
    2\kappa\|x-\mu\|_2^2
    +
    \kappa^{-1}\|P_{S\cup\tilde S}\xi\|_2^2 + 3(\sigma^2\wedge d^2).
\end{align*}
Choosing, for example, \(\kappa=1/4\), and absorbing the term
\begin{align*}
    2\kappa\|\tilde\mu-\mu\|_2^2
\end{align*}
into the left-hand side, we obtain
\begin{align}\label{cool:inequality}
    \|\tilde\mu-\mu\|_2^2
    \lesssim
    \|x-\mu\|_2^2
    +
    \operatorname{pen}(s)
    -
    \operatorname{pen}(\tilde s)
    +
    \|P_{S\cup\tilde S}\xi\|_2^2 +  (\sigma^2\wedge d^2).
\end{align}
This is the desired deterministic reduction: the estimation error is controlled
by the approximation error, the penalty difference, and the noise
energy on the union of the true comparison support and the selected support.

Recall that we are assuming that the coordinates of \(\xi\) are
independent, mean-zero, sub-Gaussian random variables with sub-Gaussian parameter $\leq \sigma$.
We now apply the Hanson--Wright inequality: there exist universal constants
\(c,C>0\) such that, for every fixed matrix \(A\),
\begin{align*}
    \mathbb P\left(
        \left|
            \xi^\top A\xi-\mathbb E \xi^\top A\xi
        \right|
        >
        C\sigma^2\left(
            \|A\|_{\mathrm F}\sqrt t
            +
            \|A\|_{\mathrm{op}}t
        \right)
    \right)
    \le
    2e^{-ct}.
\end{align*}
Apply this with \(A=P_T\), where \(T\subseteq[n]\) and \(|T|=m\). Then
\begin{align*}
    \|P_T\|_{\mathrm F}=\sqrt m,
    \qquad
    \|P_T\|_{\mathrm{op}}=1,
\end{align*}
and
\begin{align*}
    \mathbb E\|P_T\xi\|_2^2
    =
    \sum_{j\in T}\mathbb E\xi_j^2
    \le
    \sigma^2 m .
\end{align*}
Therefore, for every fixed \(T\) with \(|T|=m\),
\begin{align*}
    \mathbb P\left(
        \|P_T\xi\|_2^2
        >
        \sigma^2 m
        +
        C\sigma^2\{\sqrt{mt}+t\}
    \right)
    \le
    2e^{-ct}.
\end{align*}
Equivalently, after changing the universal constant \(C\),
\begin{align*}
    \mathbb P\left(
        \|P_T\xi\|_2^2
        >
        C\sigma^2\{m+\sqrt{mt}+t\}
    \right)
    \le
    2e^{-ct}.
\end{align*}
Since \(\sqrt{mt}\le (m+t)/2\), this implies
\begin{align*}
    \mathbb P\left(
        \|P_T\xi\|_2^2
        >
        C\sigma^2(m+t)
    \right)
    \le
    2e^{-ct}.
\end{align*}

We now make this bound uniform over all supports. Fix \(u>0\). For sets
\(T\) with \(|T|=m\), choose
\begin{align*}
    t_{m,u}
    :=
    c^{-1}\left\{
        u+\log\binom{n}{m}+m+\log 2
    \right\}.
\end{align*}
Then
\begin{align*}
\begin{aligned}
    &\mathbb P\left(
        \exists\, T\subseteq[n],\ |T|=m\ge 1:
        \|P_T\xi\|_2^2
        >
        C\sigma^2(m+t_{m,u})
    \right) \\
    &\qquad\le
    \sum_{m=1}^n
    2\binom{n}{m}e^{-ct_{m,u}} \\
    &\qquad=
    \sum_{m=1}^n
    e^{-u}e^{-m}
    \le
    e^{-u}.
\end{aligned}
\end{align*}
Hence, with probability at least \(1-e^{-u}\), uniformly over all
nonempty \(T\subseteq[n]\), writing \(m=|T|\),
\begin{align*}
    \|P_T\xi\|_2^2
    \le
    C\sigma^2
    \left\{
        m
        +
        \log\binom{n}{m}
        +
        u
    \right\},
\end{align*}
where we have absorbed the $\log 2$ term in the $m$ term by adjusting the constant and assuming $m \geq 1$. Using
\begin{align*}
    \log \binom{n}{m}
    \le
    m\log(en/m)
    =:
    \phi(m),
    \qquad
    m\le \phi(m),
\end{align*}
we obtain the uniform bound
\begin{align*}
    \|P_T\xi\|_2^2
    \le
    C\sigma^2\{\phi(|T|)+u\}
\end{align*}
simultaneously for all \(T\subseteq[n]\) with $|T| \geq 1$.

Now let \(S=\operatorname{supp}(x)\) and
\(\tilde S=\operatorname{supp}(\tilde\mu)\), with
\begin{align*}
    |S|\le s,
    \qquad
    |\tilde S|\le \tilde s .
\end{align*}
Applying the preceding uniform bound to \(T=S\cup\tilde S\), we get
\begin{align*}
    \|P_{S\cup\tilde S}\xi\|_2^2
    \le
    C\sigma^2
    \left\{
        \phi(|S\cup\tilde S|)
        +
        u
    \right\}.
\end{align*}
Since \(|S\cup\tilde S|\le n \wedge(s+\tilde s)\), we have $\phi(|S\cup\tilde S|) \leq \phi(s) + \phi(\tilde s)$. Indeed if $s + \tilde s \leq n$ this follows by monotonicity and subadditivity of $\phi$; else if $s + \tilde s \geq n$ we have
\begin{align*}
   \phi(|S\cup\tilde S|)
    \le
    \phi(n) = n < s + \tilde s \leq \phi(s) + \phi(\tilde s),
\end{align*}
we obtain
\begin{align*}
    \|P_{S\cup\tilde S}\xi\|_2^2
    \le
    C\sigma^2
    \left\{
        s\log(en/s)
        +
        \tilde s\log(en/\tilde s)
        +
        u
    \right\}.
\end{align*}
Consequently, if the basic inequality contains the stochastic term with a
fixed numerical coefficient \(c_0>0\), then on the preceding event,
\begin{align*}
    c_0\|P_{S\cup\tilde S}\xi\|_2^2
    \le
    Cc_0\sigma^2
    \left\{
        s\log(en/s)
        +
        \tilde s\log(en/\tilde s)
        +
        u
    \right\}.
\end{align*}
Therefore, for the penalty
\begin{align*}
    \operatorname{pen}(m)
    =
    \lambda\sigma^2 m\log(en/m),
\end{align*}
choosing \(\lambda > Cc_0\) ensures that the term depending on
\(\tilde s\) is absorbed by \(-\operatorname{pen}(\tilde s)\).
Thus, by \eqref{cool:inequality} and the above logic with probability at least \(1-e^{-u}\),
\begin{align*}
    \|\tilde\mu-\mu\|_2^2
    \lesssim
    \|x-\mu\|_2^2
    +
    \sigma^2 s\log(en/s)
    +
    \sigma^2 u + (\sigma^2\wedge d^2).
\end{align*}

Equivalently,
\begin{align*}
    \mathbb P_\mu\left(
        \|\tilde\mu-\mu\|_2^2
        \gtrsim
        \|x-\mu\|_2^2
        +
        \sigma^2 s\log(en/s)
        + \sigma^2\wedge d^2 + 
        \sigma^2 u
    \right)
    \le e^{-u}.
\end{align*}
Integrating this tail bound in \(u\) gives
\begin{align*}
    \mathbb E_\mu \|\tilde\mu-\mu\|_2^2
    \lesssim
    \|x-\mu\|_2^2
    +
    \sigma^2 s\log(en/s),
\end{align*}
for $s \geq 1$. In other words,
since \(x\in C_s\) was arbitrary we have,
\begin{align}\label{important:intermediate:inequality}
    \mathbb E_\mu \|\tilde\mu-\mu\|_2^2 \lesssim \inf_{1\le s\le n}
    \left\{
        \inf_{x\in C_s}\|\mu-x\|_2^2 + \sigma^2 s\log(en/s)
    \right\}.
\end{align}
where recall the definition of $C_s$:
\begin{align*}
    C_s:= K \cap \{x\in\mathbb R^n:\|x\|_0\le s\}.
\end{align*}

Suppose now that $n < 10$. \eqref{important:intermediate:inequality} implies that $\EE_\mu \|\tilde\mu-\mu\|_2^2 \lesssim (d^2 \wedge n\sigma^2) \asymp d^2 \wedge \sigma^2$ which is the minimax rate as we explained in Lemma \ref{first:lemma}. Hence we now assume $n \geq 10$.

We now recall a seminal result due to Edmunds and Netrusov. The result states that for $k < n/2$, the $k$-th dyadic entropy number $e_k(K) \asymp u_s(K)$ where $s = \left\lceil \frac{k}{\log(n/k + 1)} \right \rceil$, and $u_s(K)$ is the following expression:
\begin{align*}
    u_s(K) = \sup_{a \in K} \left( \sum_{j=1}^n \min\{a_j^*,a_s^*\}^2\right)^{1/2},
\end{align*}
where $a^*$ denotes a decreasing rearrangement of the absolute values of the entries of the vector $a$: $a_1^* \geq a_2^* \geq \ldots \geq a_n^* \geq 0$. 

We immediately observe that the quantity $u_s(K)$ is related to soft-thresholding. Specifically, set $\tau = a^*_s$ where $a^*$ denotes a decreasing rearrangement of $|a|$ (where $|\cdot|$ is applied entrywise) of the vector $a \in \RR^n$, and note that $\min (|a_i|, \tau) = |a_i| - (|a_i| - \tau)_+$. Thus if $S_\tau(a)$ is a vector given coordinate-wise as $S_\tau(a)_i = \sign(a_i)(|a_i| - \tau)_+$, then
\begin{align*}
    u_s(K) = \sup_{\|a\|_K \leq 1}\|a - S_\tau(a)\|_2
\end{align*}

Now let us consider the related hard-thresholding quantity

\begin{align*}
    h_s(a) = (\sum_{j > s} (a_j^*)^2)^{1/2},
\end{align*}
while
\begin{align*}
    u_s(a) = (\sum_{j \in [n]} (a_j^*\wedge a_s^*)^2)^{1/2},
\end{align*}

Then $u^2_s(a) = h^2_s(a) + s (a_s^*)^{2}$ for any $s \geq 1$. Thus, clearly $u_s(a) \geq h_s(a)$ for $s \geq 1$. This immediately implies that $u_s(K) = \sup_{a \in K} u_s(a) \geq \sup_{a \in K} h_s(a) =: h_s(K)$, where we denoted with $h_s(K)$ the worst case best $s$-sparse approximation of any vector in $K$.

Clearly, $\sup_{\mu \in K}\inf_{x\in C_s}\|\mu-x\|_2^2 \leq\sup_{\mu \in K} h^2_s(\mu)  \leq u^2_s(K)$ for $s \geq 1$. Observe that on the other hand since $\tilde \mu$ is a proper estimator (when $\sigma > r/\sqrt{n}$), it also follows that $ \mathbb E_\mu \|\tilde\mu-\mu\|_2^2 \lesssim d^2$, and thus the above inequality extends to the case when $s = 0$.  Furthermore, as we mentioned above when $s = n$ we have the RHS of \eqref{important:intermediate:inequality} is $n\sigma^2$. Therefore, since \(x\in C_s\) and \(s\) were arbitrary,
\begin{align*}
    \mathbb E_\mu \|\tilde\mu-\mu\|_2^2
    \lesssim
    \inf_{1\le s\le {n}}
    \left\{
        (u_s^2(K)
        +
        \sigma^2 s\log(en/s))\wedge d^2 \wedge (n\sigma^2)
    \right\} 
\end{align*}

We now state a simple calculation whose proof is deferred to the appendix.

\begin{lemma}\label{simple:calculation}
Let \(1\le k\le n\), and define
\begin{align*}
    s:=\left\lceil \frac{k}{\log(1+n/k)} \right\rceil .
\end{align*}
Then
\begin{align*}
    s\log\left(\frac{en}{s}\right)
    \asymp
    k+\log(en),
\end{align*}
where the implicit constants are universal. In particular, if $k \gtrsim \log(en)$,
then
\begin{align*}
    s\log\left(\frac{en}{s}\right)
    \asymp k .
\end{align*}
\end{lemma}

The lemma above in conjunction with Edmunds and Netrusov's theorem implies that 
\begin{align*}
    \EE \|\tilde \mu - \mu\|_2^2 \lesssim \inf_{\log (en)\le k < {n/2}}
    \left\{
        (e_k^2(K) + \sigma^2 k)\wedge d^2 \wedge (n\sigma^2)
    \right\} 
\end{align*}

Let $ k^* = \argmin_{1 \le k < n/2}(e_k^2(K) + \sigma^2 k)$ and $k^{**} = \argmin_{\log (en)\le k < n/2}(e_k^2(K) + \sigma^2 k)$.

Let us compare the two expressions $e_{k^*}^2(K) + \sigma^2 k^*$ vs $e_{k^{**}}^2(K) + \sigma^2 k^{**}$. If $k^* \geq \log (en)$ then $k^* = k^{**}$ and there is no difference between the two.

Suppose now that $k^* < \lceil \log (en) \rceil$. Then $e_{k^*}^2(K) \geq e_{\lceil \log (en) \rceil}^2(K)$. Since $k^* \geq 1$ we have
\begin{align*}
    e_{k^*}^2(K) + k^* \sigma^2 \geq e_{k^*}^2(K) + \sigma^2.
\end{align*}
On the other hand
\begin{align*}
    e_{k^{**}}^2(K) + \sigma^2 k^{**} \leq e_{\lceil \log (en) \rceil}^2(K) + \lceil \log (en) \rceil\sigma^2 \leq e_{k^*}^2(K) + \lceil \log (en) \rceil\sigma^2,
\end{align*}
implying that the difference between the two expressions is at most $\log(en)\sigma^2$. Hence we conclude
\begin{align*}
    \EE \|\tilde \mu - \mu\|_2^2 & \lesssim \inf_{\log (en)\le k <  n/2}
    \left\{
        (e_k^2(K) + \sigma^2 k)\wedge d^2 \wedge (n\sigma^2)
    \right\} \\
    & \leq \inf_{1\le k < n/2}
    \left\{
        (e_k^2(K) + \sigma^2 k)\wedge d^2 \wedge (n\sigma^2) 
    \right\} + (\log(en)\sigma^2)\wedge d^2 \wedge n \sigma^2.
\end{align*}
Now by Lemma \ref{first:lemma} (recall we are assuming $n \geq 10$) we have
\begin{align*}
    \EE \|\tilde \mu - \mu\|_2^2 & \lesssim \eta^{*2}\log(1 + d/\sigma) \wedge d^2 \wedge n\sigma^2 + (\log(en)\sigma^2 )\wedge d^2 \wedge n\sigma^2
\end{align*}
Recall now that we are assuming $\sigma \geq r/\sqrt{n}$. In this case we can further bound RHS above as:
\begin{align*}
    \EE \|\tilde \mu - \mu\|_2^2 & \lesssim \eta^{*2}\log(1 + 2\sqrt{n}R/r) \wedge d^2 \wedge n\sigma^2 + (\log(en)\sigma^2 )\wedge d^2 \wedge n\sigma^2,
\end{align*}
To this end recall the definitions of $r$ and $R$ in Lemma \ref{well-balancedness-norm}. We conclude
\begin{align*}
    \EE \|\tilde \mu - \mu\|_2^2 & \lesssim \eta^{*2}\log( 1 + 2n^{3/2}) \wedge d^2 \wedge n\sigma^2 + (\log(en)\sigma^2 )\wedge d^2 \wedge n \sigma^2.
\end{align*}
As shown in \cite{prasadan2024some} $\eta^* \gtrsim \sigma \wedge d$ and therefore
\begin{align*}
    \EE \|\tilde \mu - \mu\|_2^2 & \lesssim \eta^{*2}\log(2n) \wedge d^2 \wedge n\sigma^2.
\end{align*}
which is what we aimed to show.
\end{proof}

\section{Linear Regression}\label{regression:section}
In this section we extend our main result to the linear regression setting. Let
\begin{align*}
    Y=X\beta+\xi,\qquad
    \widehat \Sigma:=\frac{X^\top X}{N},
\end{align*}
and suppose that the entries \(\xi_1,\dots,\xi_N\)
are i.i.d., mean-zero, sub-Gaussian random variables with sub-Gaussian parameter at most $\sigma$ where $\sigma \asymp 1$ is a \textbf{fixed, positive} constant. 

Let $\beta \in K \subseteq \RR^p$ where $K$ has a symmetric Minkowski functional as detailed in Definition \ref{symmetric:norm:def}. We assume a ``moderate dimensional setting'': $N \gtrsim p$ for some absolute constant. We suppose that the covariates (i.e. the rows of the matrix $X$) $X_i$ are independent from the noise $\xi_i$. We also assume that the diameter of $K$ is bounded above by an absolute constant, i.e., $d := \diam(K) = O(1)$. We will remark later why this assumption is not very restrictive in the moderate dimensional regime. 

Suppose the covariates $X_i$ are centered\footnote{If the predictors $X_i$ are not centered one can consider $(Y_{2i} - Y_{2i-1}, X_{2i} - X_{2i-1})_{i \in [\lfloor N/2\rfloor]}$ to center the predictors while leaving the $\beta$ unchanged. We note that this operation prevents the model from having an intercept.} sub-Gaussian variables with a well conditioned covariance matrix $\EE X_i X_i\T = \Sigma$, i.e., $c' \leq \lambda_{\min}(\Sigma) \leq \lambda_{\max}(\Sigma) \leq C'$ for some $c',C' > 0$, and sub-Gaussian parameter of constant order, i.e. $\sup_{v \in S^{p-1}} \EE \exp(\lambda v\T X_i) \leq \exp(\lambda^2 \zeta^2/2)$ for $\zeta = O(1)$. Then the minimax rate $\eta^{*2}$, in the case when $\xi_i \sim N(0,1)$, is given by the entropy equation 
\begin{align*}
    \eta^* = \sup_\eta\{\eta \geq 0: N\eta^2 \leq \log M_K^{\operatorname{loc}}(\eta)\},
\end{align*} 
as shown in \cite{prasadan2025characterizingminimaxratenonparametric}. 

Define the effective noise vector
\begin{align*}
    \gamma
    :=
    \widehat \Sigma^{-1}\frac{X^\top \xi}{N},
\end{align*}
assuming that the matrix $\widehat \Sigma$ is invertible. In case $\hat \Sigma$ is singular, our estimator outputs an arbitrary point in the set $K$. Observe that the transformation
\begin{align*}
    \widehat \Sigma^{-1} X^\top  Y/N = \beta + \gamma,
\end{align*}
reduces the problem to a Gaussian sequence model setting. However, $\gamma$ is not comprised of independent sub-Gaussian components as required by the theory in Section \ref{main:section}. Since we used the ranodmness of $\gamma$ only when using the Hanson-Wright inequality this is the main place where we need to modify the proof. In addition, we need to take into account the case when the matrix $\widehat \Sigma$ is singular. 

Denote with $\tilde \beta$ the estimate, $\tilde \beta= 0$ if $\widehat \Sigma$ is not invertible, and $\tilde \beta = \tilde \mu(\widehat \Sigma^{-1} X^\top  Y/N)$ with penalty chosen as $\lambda\frac{\sigma^2}{N}m\log(ep/m)$, with $\lambda$ sufficiently large. We have
\begin{theorem}\label{linear:regression:theorem} Let $N \gtrsim p$. Under the assumptions above, and if $\lambda \asymp c_0$ for some absolute constant $c_0 > 0$ we have:
    \begin{align*}
        \EE \|\tilde \beta - \beta\|_2^2\lesssim \eta^{*2} \log(2p) \wedge d^2 \wedge \frac{p}{N},
    \end{align*}
    where $\eta^*$ is the critical radius defined above.
\end{theorem}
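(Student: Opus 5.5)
We reduce to the proof of Theorem \ref{main:result}, working on a good event where $\widehat\Sigma$ is well conditioned and treating its complement separately. Let $\mathcal E$ be the event $\{c'/2 \le \lambda_{\min}(\widehat\Sigma)\le \lambda_{\max}(\widehat\Sigma)\le 2C'\}$. Standard covariance concentration for sub-Gaussian rows with $N\gtrsim p$ (with a large enough constant) gives $\mathbb P(\mathcal E^c)\le 2e^{-cN}$. On $\mathcal E^c$ the estimator lies in $K$ (or equals $0$), so the loss is at most $d^2$. Its contribution to the risk is therefore at most $d^2 e^{-cN}$, and this is $\lesssim d^2\wedge p/N$ because $e^{-cN}\lesssim 1/N\le p/N$. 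It is also $\lesssim\eta^{*2}$, since $\eta^{*2}\gtrsim (1/N)\wedge d^2$ by the analogue of $\eta^*\gtrsim\sigma\wedge d$ with noise level $\sigma/\sqrt N$.

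On $\mathcal E$, and conditionally on $X$, the vector $Z:=\widehat\Sigma^{-1}X^\top Y/N=\beta+\gamma$ is a sequence model with noise $\gamma = A\xi$, where $A=\widehat\Sigma^{-1}X^\top/N$. Here $\xi$ is independent of $X$ with independent sub-Gaussian entries. The deterministic part of the proof of Theorem \ref{main:result} carries over verbatim: the basic inequality, the $\epsilon$-approximate projection, and \eqref{cool:inequality}. It leaves the stochastic term $\|P_T\gamma\|_2^2=\xi^\top A^\top P_T A\xi$, which we control with Hanson--Wright conditionally on $X$, setting $M_T=A^\top P_TA$.

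\textbf{Main obstacle: the stochastic term.} We have $AA^\top=\widehat\Sigma^{-1}/N$, so on $\mathcal E$
\[
\|M_T\|_{\rm op}\le \|A\|_{\rm op}^2\le \frac{2}{c'N},\qquad \operatorname{tr}M_T=\operatorname{tr}(P_TAA^\top P_T)\le \frac{2|T|}{c'N},\qquad \|M_T\|_F\le\sqrt{|T|}\,\|M_T\|_{\rm op}.
\]
Hence $\mathbb E[\|P_T\gamma\|_2^2\mid X]\lesssim |T|\sigma^2/N$. Hanson--Wright then gives $\|P_T\gamma\|_2^2\lesssim \frac{\sigma^2}{N}(|T|+t)$ with probability at least $1-2e^{-ct}$.

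The union bound over supports is identical to before. Uniformly in $T$ we get $\|P_T\gamma\|_2^2\lesssim \frac{\sigma^2}{N}\{|T|\log(ep/|T|)+u\}$, and the penalty $\lambda\frac{\sigma^2}{N}m\log(ep/m)$ absorbs the $\tilde s$ term. Integrating in $u$ yields the conditional analogue of \eqref{important:intermediate:inequality}:
\[
\mathbb E[\|\tilde\beta-\beta\|_2^2\mathbbm 1_{\mathcal E}]\lesssim \inf_{s}\Bigl\{\inf_{x\in C_s}\|\beta-x\|_2^2+\tfrac{\sigma^2}{N}s\log(ep/s)\Bigr\}\wedge d^2\wedge \tfrac{p}{N}.
\]
For the $p/N$ cap we take $s=p$ (then $x=\beta$, so the approximation error vanishes). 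The $d^2$ cap holds because the estimator is proper. The accuracy $\epsilon$ should be chosen with $\sigma^2/N$ in place of $\sigma^2$.

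The rest copies the end of the proof of Theorem \ref{main:result} with $\sigma^2$ replaced by $\sigma^2/N\asymp 1/N$ and $n$ by $p$. Edmunds--Netrusov together with Lemma \ref{simple:calculation} converts the bound to $\inf_k (e_k^2(K)+k/N)$, up to an extra $\log(ep)/N$ term. Lemma \ref{first:lemma}, whose proof is purely entropic and applies with $\sigma^2=1/N$ and the local entropy equation $N\eta^2\le\log M^{\rm loc}(\eta)$, compares this with $\eta^{*2}\lceil\log_c(1+d\sqrt N)\rceil$.

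Here the case split $\sigma\ge r/\sqrt n$ must be redone, and this is the secondary delicate point. We use $d=O(1)$ and $N\gtrsim p$: if $1/N<r^2/p$ then, as in \cite[Lemma 2.10]{neykov2025polynomial}, $\eta^{*2}\asymp p/N$, which the $s=p$ bound already attains. Otherwise $\sqrt N\,d\lesssim \sqrt p\,R/r\lesssim p^{3/2}$, so the logarithm is $\lesssim\log(2p)$. Finally, $\eta^{*2}\gtrsim \frac1N\wedge d^2$ absorbs the additive $\frac{\log(ep)}{N}\wedge d^2\wedge\frac pN$ term. Together with the $\mathcal E^c$ contribution, this gives $\eta^{*2}\log(2p)\wedge d^2\wedge p/N$.
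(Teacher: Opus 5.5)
Your proposal is correct and follows essentially the same route as the paper. You condition on $X$, apply Hanson--Wright to $\|P_T\gamma\|_2^2=\xi^\top A^\top P_T A\xi$ with trace, operator and Frobenius bounds of order $\|\widehat\Sigma^{-1}\|_{\mathrm{op}}/N$ (via $AA^\top=\widehat\Sigma^{-1}/N$, where the paper uses the isometry $X\widehat\Sigma^{-1/2}/\sqrt N$), run the same union bound and penalty absorption, control $\|\widehat\Sigma^{-1}\|_{\mathrm{op}}$ by sub-Gaussian singular-value bounds, and fold the $d^2e^{-cN}$ failure contribution into the rate; your explicit redo of the $\sigma\ge r/\sqrt n$ case split with $\sigma^2/N$, and the $p/N$ cap from $s=p$, make precise what the paper leaves implicit.
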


\begin{proof}[Proof of Theorem \ref{linear:regression:theorem}] We will simply show the modification required to show the Hanson-Wright argument.
    For a coordinate set \(T\subseteq[p]\), let \(P_T\) denote the coordinate
projection onto \(T\), and write \(m=|T|\). Then
\begin{align*}
    \|P_T\gamma\|_2^2
    =
    \xi^\top B_T\xi,
\end{align*}
where
\begin{align*}
    B_T
    :=
    \frac{1}{N^2}
    X\widehat \Sigma^{-1}P_T\widehat \Sigma^{-1}X^\top .
\end{align*}
Under our assumptions $\widehat \Sigma$ is invertible with high probability (see below for a precise quantification of this statement).

We apply the Hanson--Wright inequality conditionally on \(X\). That is,
there exist universal constants \(c,C>0\) such that, for every \(t>0\),
\begin{align*}
    \mathbb P\left(
        \xi^\top B_T\xi
        >
        \mathbb E[\xi^\top B_T\xi\mid X]
        +
        C\sigma^2\left\{
            \|B_T\|_{\mathrm F}\sqrt t
            +
            \|B_T\|_{\mathrm{op}}t
        \right\}
        \,\middle|\, X
    \right)
    \le
    2e^{-ct}.
\end{align*}

We now bound the trace, Frobenius norm, and operator norm of \(B_T\).
First,
\begin{align*}
\begin{aligned}
    \operatorname{tr}(B_T)
    &=
    \frac{1}{N^2}
    \operatorname{tr}\left(
        X\widehat \Sigma^{-1}P_T\widehat \Sigma^{-1}X^\top
    \right)  \\
    &=
    \frac{1}{N^2}
    \operatorname{tr}\left(
        \widehat \Sigma^{-1}P_T\widehat \Sigma^{-1}X^\top X
    \right)  \\
    &=
    \frac{1}{N}
    \operatorname{tr}\left(
        \widehat \Sigma^{-1}P_T
    \right) \\
    &\le
    \frac{m}{N}\|\widehat \Sigma^{-1}\|_{\mathrm{op}} .
\end{aligned}
\end{align*}
Hence
\begin{align*}
    \mathbb E[\xi^\top B_T\xi\mid X]
    \le
    \sigma^2\operatorname{tr}(B_T)
    \le
    \sigma^2\frac{m}{N}\|\widehat \Sigma^{-1}\|_{\mathrm{op}} .
\end{align*}

Next, set
\begin{align*}
    Q:=\frac{1}{\sqrt N}X\widehat \Sigma^{-1/2}.
\end{align*}
Then \(Q^\top Q=I_p\), and
\begin{align*}
    B_T
    =
    \frac{1}{N}
    Q\widehat \Sigma^{-1/2}P_T\widehat \Sigma^{-1/2}Q^\top .
\end{align*}
Therefore,
\begin{align*}
    \|B_T\|_{\mathrm{op}}
    \le
    \frac{1}{N}
    \|\widehat \Sigma^{-1/2}P_T\widehat \Sigma^{-1/2}\|_{\mathrm{op}}
    \le
    \frac{1}{N}\|\widehat \Sigma^{-1}\|_{\mathrm{op}},
\end{align*}
and
\begin{align*}
\begin{aligned}
    \|B_T\|_{\mathrm F}
    &\le
    \frac{1}{N}
    \|\widehat \Sigma^{-1/2}P_T\widehat \Sigma^{-1/2}\|_{\mathrm F}  \\
    &\le
    \frac{1}{N}
    \|\widehat \Sigma^{-1}\|_{\mathrm{op}}\|P_T\|_{\mathrm F} \\
    &=
    \frac{\sqrt m}{N}\|\widehat \Sigma^{-1}\|_{\mathrm{op}} .
\end{aligned}
\end{align*}

Combining these estimates with Hanson--Wright gives, for every fixed
\(T\subseteq[p]\) with \(|T|=m\),
\begin{align*}
    \mathbb P\left(
        \|P_T\gamma\|_2^2
        >
        C\sigma^2
        \frac{\|\widehat \Sigma^{-1}\|_{\mathrm{op}}}{N}
        \{m+\sqrt{mt}+t\}
        \,\middle|\,X
    \right)
    \le
    2e^{-ct}.
\end{align*}
Since \(\sqrt{mt}\le (m+t)/2\), after changing the constant \(C\),
\begin{align*}
    \mathbb P\left(
        \|P_T\gamma\|_2^2
        >
        C\sigma^2
        \frac{\|\widehat \Sigma^{-1}\|_{\mathrm{op}}}{N}
        (m+t)
        \,\middle|\,X
    \right)
    \le
    2e^{-ct}.
\end{align*}

Now define
\begin{align*}
    \phi(m):=m\log(ep/m),\qquad 1\le m\le p,
\end{align*}
and set \(\operatorname{pen}(0)=0\). To make the bound uniform over all supports, fix
\(u>0\), and for sets \(T\) with \(|T|=m\), choose
\begin{align*}
    t_{m,u}
    :=
    c^{-1}
    \left\{
        u+\log\binom{p}{m}+m+\log 2
    \right\}.
\end{align*}
By a union bound,
\begin{align*}
\begin{aligned}
    &\mathbb P\left(
        \exists\, T\subseteq[p],\ |T|=m\ge 1:
        \|P_T\gamma\|_2^2
        >
        C\sigma^2
        \frac{\|\widehat \Sigma^{-1}\|_{\mathrm{op}}}{N}
        (m+t_{m,u})
        \,\middle|\, X
    \right) \\
    &\qquad\le
    \sum_{m=1}^p
    2\binom{p}{m}e^{-ct_{m,u}}
    \le
    e^{-u}.
\end{aligned}
\end{align*}
Thus, with conditional probability at least \(1-e^{-u}\), uniformly over
all \(T\subseteq[p]\),
\begin{align*}
    \|P_T\gamma\|_2^2
    \le
    C\sigma^2
    \frac{\|\widehat \Sigma^{-1}\|_{\mathrm{op}}}{N}
    \left\{
        |T|\log(ep/|T|)
        +
        u
    \right\}.
\end{align*}

Applying this bound to \(T=S\cup\widehat S\), where
\begin{align*}
    |S|\le s,
    \qquad
    |\widehat S|\le \widehat s,
\end{align*}
we obtain
\begin{align*}
    \|P_{S\cup\widehat S}\gamma\|_2^2
    \le
    C\sigma^2
    \frac{\|\widehat \Sigma^{-1}\|_{\mathrm{op}}}{N}
    \left\{
        \phi(|S\cup\widehat S|)
        +
        u
    \right\}.
\end{align*}
Since
\begin{align*}
    |S\cup\widehat S|\le p \wedge (s+\widehat s),
\end{align*}
we have $\phi(|S\cup\widehat S|) \leq \phi(s)+\phi(\widehat s)$.
Indeed if $s + \widehat s \leq p$, this follows from monotonicity and subadditivity of \(\phi\), while if $s + \widehat s \geq p$:
\begin{align*}
    \phi(|S\cup\widehat S|)
    \leq \phi(p) \leq 
    s + \widehat s \leq 
    \phi(s)+\phi(\widehat s).
\end{align*}
Hence we get
\begin{align*}
    \|P_{S\cup\widehat S}\gamma\|_2^2
    \le
    C\sigma^2
    \frac{\|\widehat \Sigma^{-1}\|_{\mathrm{op}}}{N}
    \left\{
        s\log(ep/s)
        +
        \widehat s\log(ep/\widehat s)
        +
        u
    \right\}.
\end{align*}

Therefore, the corresponding sparsity penalty should be chosen as
\begin{align*}
    \operatorname{pen}(m)
    =
    \lambda\sigma^2
    \frac{\|\widehat \Sigma^{-1}\|_{\mathrm{op}}}{N}
    m\log(ep/m),
\end{align*}
with \(\lambda>0\) sufficiently large. 

It remains to control
\(\|\widehat \Sigma^{-1}\|_{\mathrm{op}}\). Recall that
\[
    \widehat \Sigma
    :=
    \frac{1}{N}X^\top X
    =
    \frac{1}{N}\sum_{i=1}^N X_iX_i^\top .
\]
Let
\[
    Z_i:=\Sigma^{-1/2}X_i .
\]
Then \(\EE Z_iZ_i^\top=I_p\). Moreover, since
\(\lambda_{\min}(\Sigma)\ge c'\) and \(X_i\) has constant-order
sub-Gaussian norm, the vectors \(Z_i\) are isotropic sub-Gaussian with
sub-Gaussian norm bounded by a constant depending only on \(c'\) and the
original sub-Gaussian parameter.

By the standard singular-value bound for matrices with independent
isotropic sub-Gaussian rows \cite{vershynin2010introduction}, there exist constants \(c,C>0\) such that,
for every \(t\ge 0\), with probability at least \(1-2e^{-ct^2}\),
\[
    \sqrt N - C\sqrt p - t
    \le
    s_{\min}(Z)
    \le
    s_{\max}(Z)
    \le
    \sqrt N + C\sqrt p + t .
\]
Equivalently,
\[
    \lambda_{\min}\left(\frac{1}{N}Z^\top Z\right)
    \ge
    \left(
        1 - C\sqrt{\frac{p}{N}} - \frac{t}{\sqrt N}
    \right)^2
\]
and
\[
    \lambda_{\max}\left(\frac{1}{N}Z^\top Z\right)
    \le
    \left(
        1 + C\sqrt{\frac{p}{N}} + \frac{t}{\sqrt N}
    \right)^2 .
\]
Since
\[
    \widehat \Sigma
    =
    \Sigma^{1/2}
    \left(\frac{1}{N}Z^\top Z\right)
    \Sigma^{1/2},
\]
we get
\[
    \lambda_{\min}(\widehat \Sigma)
    \ge
    \lambda_{\min}(\Sigma)
    \left(
        1 - C\sqrt{\frac{p}{N}} - \frac{t}{\sqrt N}
    \right)^2 .
\]
Therefore, using \(\lambda_{\min}(\Sigma)\ge c'\),
\[
    \lambda_{\min}(\widehat \Sigma)
    \ge
    c'
    \left(
        1 - C\sqrt{\frac{p}{N}} - \frac{t}{\sqrt N}
    \right)^2 .
\]
Consequently,
\[
    \|\widehat \Sigma^{-1}\|_{\mathrm{op}}
    =
    \frac{1}{\lambda_{\min}(\widehat \Sigma)}
    \le
    \frac{1}{c'}
    \left(
        1 - C\sqrt{\frac{p}{N}} - \frac{t}{\sqrt N}
    \right)^{-2},
\]
provided the quantity in parentheses is positive.

In particular, if
\[
    C\sqrt{\frac{p}{N}}+\frac{t}{\sqrt N}
    \le \frac12,
\]
then
\[
    \|\widehat \Sigma^{-1}\|_{\mathrm{op}}
    \le
    \frac{4}{c'}.
\]
Thus, whenever \(N\gtrsim p+t^2\), the empirical covariance matrix is
well-conditioned with high probability, and
\[
    \|\widehat \Sigma^{-1}\|_{\mathrm{op}}
    \lesssim \frac{1}{c'} .
\]
On this event, the preceding conditional Hanson--Wright bound yields,
uniformly over all \(T\subseteq[p]\),
\[
    \|P_T\gamma\|_2^2
    \le
    C\sigma^2
    \frac{1}{Nc'}
    \left\{
        |T|\log(ep/|T|)
        +
        u
    \right\}.
\]
Therefore, applying this to \(T=S\cup\widehat S\), we obtain
\[
    \|P_{S\cup\widehat S}\gamma\|_2^2
    \le
    C\sigma^2
    \frac{1}{Nc'}
    \left\{
        s\log(ep/s)
        +
        \widehat s\log(ep/\widehat s)
        +
        u
    \right\}.
\]
Thus, up to constants depending only on \(c'\), the sparsity penalty may
be chosen as
\[
    \operatorname{pen}(m)
    =
    \lambda\frac{\sigma^2}{N}m\log(ep/m),
\]
for \(\lambda>0\) sufficiently large.

What is more, the bound on the sample covariance matrix of the covariates shows that it is invertible with probability at least $1 - \exp(-CN)$ whenever $N \gtrsim p$. It follows that we have error $d^2 = O(1)$ with probability at most $\exp(-CN)$. On the other hand the smallest possible value of the minimax rate is $d^2 \wedge 1/N$. Since $d^2 \cdot\exp(-CN) \ll d^2 \wedge 1/N$ this term can be ``folded'' in the minimax rate. This completes the proof.

\end{proof}

\begin{remark}
    Let us briefly explain why the bounded-diameter assumption is not very restrictive
in the moderate-dimensional regime. Suppose \(N \gtrsim p\), and let
\[
    \widehat \beta_{\rm OLS}
    =
    \widehat\Sigma^{-1}X^\top Y/N
\]
on the event that \(\widehat\Sigma\) is invertible. Since
\[
    \widehat \beta_{\rm OLS}-\beta
    =
    \widehat\Sigma^{-1}X^\top \xi/N,
\]
the same Hanson--Wright argument used above, applied with \(T=[p]\), gives the following:
for every \(u>0\), on the event \(\|\widehat\Sigma^{-1}\|_{\rm op}\lesssim 1\),
\[
    \mathbb P\left(
        \|\widehat \beta_{\rm OLS}-\beta\|_2^2
        >
        C\sigma^2 \frac{p+u}{N}
        \,\middle|\, X
    \right)
    \leq 2e^{-cu}.
\]
Moreover, under the sub-Gaussian design assumptions and \(N\gtrsim p\),
\[
    \mathbb P\left(\|\widehat\Sigma^{-1}\|_{\rm op}\lesssim 1\right)
    \geq 1-2e^{-cN}.
\]
Thus, choosing for instance \(u\asymp N\), we obtain
\[
    \|\widehat \beta_{\rm OLS}-\beta\|_2^2
    \lesssim \sigma^2
\]
with probability at least \(1-e^{-cN}\), since \(p\lesssim N\) and \(\sigma=O(1)\).

Now let
\[
    b_0 := \Pi_K \widehat \beta_{\rm OLS},
\]
where \(\Pi_K\) denotes (approximate) Euclidean projection onto \(K\). Since \(\beta\in K\) and the (approximate) Euclidean projection onto a closed convex set is (approximately) non-expansive (see Corollary A.2. in \cite{neykov2025polynomial}),
\[
    \|b_0-\beta\|_2
    =
    \|\Pi_K\widehat \beta_{\rm OLS}-\Pi_K\beta\|_2
    \lesssim
    \|\widehat \beta_{\rm OLS}-\beta\|_2.
\]
Consequently, with probability at least \(1-e^{-cN}\),
\[
    \|b_0-\beta\|_2^2 \lesssim \sigma^2 .
\]

Consider the shifted and rescaled observations
\[
    \widetilde Y_i
    :=
    \frac{Y_i-X_i^\top b_0}{2}
    =
    X_i^\top \theta + \widetilde \xi_i,
    \qquad
    \theta := \frac{\beta-b_0}{2},
    \qquad
    \widetilde \xi_i := \frac{\xi_i}{2}.
\]
Since \(b_0,\beta\in K\), symmetry and convexity of \(K\) imply
\[
    \theta=\frac{\beta-b_0}{2}\in K.\footnote{Although $\theta$ actually depends on $\xi$, inspection of the proof of Theorem \ref{linear:regression:theorem} shows that this is not an issue.}
\]
Furthermore, on the high-probability event above,
\[
    \|\theta\|_2^2
    \lesssim \sigma^2 .
\]
Hence, on this event, the shifted parameter belongs to
\[
    K_\rho := K\cap \rho B_2,
    \qquad
    \rho^2 \asymp \sigma^2,
\]
which has bounded Euclidean diameter. The set \(K_\rho\) is again the unit ball of
a symmetric norm, because
\[
    K_\rho
    =
    \left\{
        x:
        \max\left(\|x\|_K,\frac{\|x\|_2}{\rho}\right)\leq 1
    \right\},
\]
and the norm \(x\mapsto \max(\|x\|_K,\|x\|_2/\rho)\) is sign- and
permutation-invariant.

We may therefore apply the estimator developed above to the transformed regression
problem
\[
    \widetilde Y_i = X_i^\top \theta+\widetilde \xi_i
\]
over the bounded-diameter symmetric norm ball \(K_\rho\). If \(\widehat\theta\) denotes
the resulting estimator, define
\[
    \widehat \beta
    :=
    \Pi_K\bigl(b_0+2\widehat\theta\bigr).
\]
Since \(\beta=b_0+2\theta\in K\), another use of non-expansiveness of projection gives
\[
    \|\widehat\beta-\beta\|_2^2
    \lesssim
    \|b_0+2\widehat\theta-(b_0+2\theta)\|_2^2
    =
    4\|\widehat\theta-\theta\|_2^2.
\]
Thus the bounded-diameter result applied to \(K_\rho\) transfers back to the original
parameter \(\beta\), up to a universal constant factor. The failure event contributes at
most
\[
    d^2 e^{-cN}
\]
to the risk, where \(d=\operatorname{diam}(K)\). Therefore, provided
\[
    d^2 \lesssim \frac{e^{cN}}{N},
\]
this contribution is at most of order \(1/N\), and hence can be absorbed into the
usual minimax-rate upper bound. In this sense, the bounded-diameter assumption is not substantially restrictive in the moderate-dimensional regime.

Moreover, this rate is near minimax optimal in the case that $d^2 \lesssim \frac{e^{cN}}{N}$. This follows since the lower bound of \cite{prasadan2025characterizingminimaxratenonparametric} does not need a condition on the diameter, and the local entropy of the set $K_\rho$ is clearly smaller than the entropy of the set $K$, as $K_\rho \subseteq K$.
\end{remark}

\section{Discussion}\label{discussion:section}

This paper proposes a computationally efficient estimator for the Gaussian
sequence model over any well-balanced symmetric norm ball.  The estimator is
based on a simple principle: sort the coordinates of the observation, project onto the corresponding sparse sections of the constraint set, and select the sparsity level using a complexity penalty.  Throughout the paper, we assume that the noise level $\sigma$ is known, since it enters the model-selection penalty used by the estimator.  The main result shows that this procedure is near minimax optimal, up to logarithmic factors, over the full class of well-balanced symmetric norm balls.  Thus the paper extends the classical thresholding intuition for $\ell_p$ balls to a substantially broader class of permutation- and sign-invariant constraints.

A useful feature of the construction is that it avoids the direct construction of entropy packings.  Although entropy numbers and local entropy play a central role in the analysis, the estimator itself is much simpler: it only requires sorting, sparse projection, and penalized model selection.  In this sense, the algorithm gives a concrete statistical realization of the entropy theory for symmetric norm balls.  The Edmunds--Netrusov characterization of entropy numbers is the key geometric input, as it connects the minimax behavior of the problem to sparse approximation.

We also considered an extension of the method to random-design linear
regression in a moderate-dimensional setting.  It would be interesting to
understand whether this regression procedure can be extended to genuinely
high-dimensional covariate settings, where the dimension may be much larger
than the sample size. In fact from \cite{prasadan2025characterizingminimaxratenonparametric} we know that even in such high-dimensional situations the minimax rate is still determined through the entropy equation. Such an extension would likely require additional structural assumptions on the design, or a more delicate analysis of the interaction between the empirical covariance matrix and the symmetric norm constraint.  In particular, one would need to understand whether the sparse projection and model-selection ideas used here remain stable when the design does not preserve Euclidean geometry uniformly over all relevant sparse sections.

Another natural question is whether the procedure can be made adaptive to the unknown noise level.  The present algorithm assumes knowledge of $\sigma$, primarily through the penalty used to select the sparsity level.  It would be useful to develop a version of the method in which this quantity is estimated from the data, or in which the model-selection step is calibrated in a scale-adaptive way, while preserving the near-minimax guarantees obtained here.

The approach developed here is also conceptually quite different from the
method in \cite{neykov2025polynomial} for polynomial-time near-optimal
estimation over certain type-2 convex bodies.  The latter relies on a
geometric-functional-analytic property of the body, namely type-2 behavior,
and uses this structure to relate entropy numbers to more algorithmically
accessible quantities.  In contrast, the present paper exploits symmetry of
the norm directly.  The signed-permutation invariance reduces the search over all sparse supports to a much simpler ordering problem, and the
Edmunds--Netrusov theorem then identifies the correct approximation scale.
Thus the two methods apply to rather different geometric regimes and should be viewed as complementary.

A natural direction for future work is to determine whether the present idea, or a suitable modification of it, can be applied to constraint sets that are not symmetric in the sense used here.  For general norm balls or convex bodies without signed-permutation invariance, sorting the coordinates of $Y$ no longer identifies the best sparse sections, and the rearrangement argument used in the proof breaks down.  Nevertheless, it is plausible that analogous procedures could be developed for other classes of structured constraints, provided one can identify a computationally tractable family of approximating submodels whose approximation error is tied to the entropy numbers of the set.

Finding such families beyond symmetric norm balls would broaden the scope of
fast near-optimal estimation and could help clarify which geometric features
are truly responsible for computational tractability.

\bibliographystyle{abbrv}
\bibliography{polytime}

\newpage 

\appendix

\section{Supplemental Proofs}
\begin{proof}[Proof of Lemma \ref{simple:calculation}]
Set
\begin{align*}
    L:=\log(1+n/k),
    \qquad
    a:=\frac{k}{L},
\end{align*}
so that \(s=\lceil a\rceil\). First observe that, without the ceiling,
\begin{align*}
    a\log\left(\frac{en}{a}\right)\asymp k.
\end{align*}
Indeed, writing \(t:=n/k\ge 1\), we have
\begin{align*}
    a=\frac{k}{\log(1+t)}
\end{align*}
and therefore
\begin{align*}
    a\log\left(\frac{en}{a}\right)
    =
    \frac{k}{\log(1+t)}
    \log\left(e t \log(1+t)\right).
\end{align*}
Since
\begin{align*}
    \log\left(e t \log(1+t)\right)
    \asymp
    \log(1+t),
    \qquad t\ge 1,
\end{align*}
it follows that
\begin{align*}
    a\log\left(\frac{en}{a}\right)\asymp k.
\end{align*}

We now account for the ceiling. Let
\begin{align*}
    f(x):=x\log\left(\frac{en}{x}\right).
\end{align*}
There are two cases.

First suppose \(a<1\). Then \(s=\lceil a\rceil=1\), and hence
\begin{align*}
    s\log\left(\frac{en}{s}\right)=\log(en).
\end{align*}
On the other hand, \(a<1\) means \(k<L\), and since
\begin{align*}
    L=\log(1+n/k)\le \log(en),
\end{align*}
we have \(k\le \log(en)\). Therefore
\begin{align*}
    k+\log(en)\asymp \log(en)
    =
    s\log\left(\frac{en}{s}\right).
\end{align*}

Now suppose \(a\ge 1\). Since \(s=\lceil a\rceil\), we have
\begin{align*}
    a\le s\le a+1\le 2a.
\end{align*}
If \(a+1\le n\), then \(f\) is increasing on \([a,a+1]\), because
\begin{align*}
    f'(x)=\log(n/x)\ge 0
    \qquad \text{for } x\le n.
\end{align*}
Thus
\begin{align*}
    f(s)\ge f(a).
\end{align*}
Also, using \(s\le 2a\) and \(s\ge a\),
\begin{align*}
    f(s)
    =
    s\log\left(\frac{en}{s}\right)
    \le
    2a\log\left(\frac{en}{a}\right)
    =
    2f(a).
\end{align*}
Hence
\begin{align*}
    f(s)\asymp f(a)\asymp k.
\end{align*}

If instead \(a+1>n\), then \(a\asymp n\). Since also \(s=\lceil a\rceil\asymp n\), we have
\begin{align*}
    f(s)\asymp n
    \qquad\text{and}\qquad
    f(a)\asymp n.
\end{align*}
Therefore again
\begin{align*}
    f(s)\asymp f(a)\asymp k.
\end{align*}

Thus, in the case \(a\ge 1\),
\begin{align*}
    s\log\left(\frac{en}{s}\right)\asymp k.
\end{align*}
Moreover \(a\ge 1\) implies \(k\ge L=\log(1+n/k)\). Hence
\begin{align*}
    \log(en)
    =
    1+\log k+\log(n/k)
    \lesssim k,
\end{align*}
so
\begin{align*}
    k+\log(en)\asymp k.
\end{align*}
Combining this with the previous case gives the uniform bound
\begin{align*}
    s\log\left(\frac{en}{s}\right)
    \asymp
    k+\log(en).
\end{align*}

The final claim follows immediately: if \(k\gtrsim \log(en)\), then
\begin{align*}
    k+\log(en)\asymp k,
\end{align*}
and therefore
\begin{align*}
    s\log\left(\frac{en}{s}\right)\asymp k.
\end{align*}
\end{proof}

\begin{proof}[Proof of Lemma \ref{well-balancedness-norm}]
By permutation invariance,
\[
\|e_i\|_K = \|e_1\|_K = a
\qquad \text{for all } i=1,\dots,n.
\]

We first prove the upper bound. By the triangle inequality and homogeneity,
\[
\|x\|_K
=
\left\|\sum_{i=1}^n x_i e_i\right\|_K
\le
\sum_{i=1}^n |x_i| \|e_i\|_K
=
a\|x\|_1 .
\]
Since $\|x\|_1 \le \sqrt n \|x\|_2$, we obtain
\[
\|x\|_K \le a\sqrt n \|x\|_2 .
\]

For the lower bound, we use the coordinatewise monotonicity of symmetric 
norms: if $|u_i|\le |v_i|$ for all $i$, then
\[
\|u\|_K \le \|v\|_K .
\]
Let $j\in \arg\max_i |x_i|$. Then $\|x\|_\infty = |x_j|$. The vector
$|x_j|e_j$ is coordinatewise dominated by $x$ in absolute value, since
\[
\bigl||x_j| (e_j)_i\bigr| \le |x_i|
\qquad \text{for all } i.
\]
Therefore, by coordinatewise monotonicity,
\[
\||x_j|e_j\|_K \le \|x\|_K .
\]
Using homogeneity and permutation invariance,
\[
\||x_j|e_j\|_K
=
|x_j|\|e_j\|_K
=
a\|x\|_\infty .
\]
Hence
\[
a\|x\|_\infty \le \|x\|_K .
\]
Finally, since $\|x\|_\infty \ge \|x\|_2/\sqrt n$, we get
\[
\|x\|_K
\ge
a\|x\|_\infty
\ge
\frac{a}{\sqrt n}\|x\|_2 .
\]

Combining the two estimates gives
\[
\frac{a}{\sqrt n}\|x\|_2
\le
\|x\|_K
\le
a\sqrt n \|x\|_2 .
\]
The claimed inclusions follow directly from these norm inequalities.
\end{proof}

\end{document}